\documentclass[10pt, reqno]{amsart}
\usepackage{amssymb,amsmath,amsthm,color, cite}
\theoremstyle{plain}
\newtheorem{theorem}{Theorem}[section]

\newtheorem{proposition}[theorem]{Proposition}

\newtheorem{lemma}[theorem]{Lemma}
\newtheorem{corollary}[theorem]{Corollary}

\theoremstyle{remark}

\usepackage{graphicx}

\usepackage
[bookmarks=true,
   bookmarksnumbered=false,
   bookmarkstype=toc]
{hyperref}

\numberwithin{equation}{section}

\newcommand{\C}{\mathbb{C}}
\newcommand{\R}{\mathbb{R}}

\renewcommand{\Im}{\operatorname{Im}}
\renewcommand{\Re}{\operatorname{Re}}


\def\({\left[}
\def\){\right]}

\newcommand{\eps}{\varepsilon}


\newcommand{\qtq}[1]{\quad\text{#1}\quad}


\sloppy


\begin{document}

\title[2$d$ cubic-quintic NLS]{Threshold scattering for the \\ 2d radial cubic-quintic NLS} 
\author{Jason Murphy}
\address{Department of Mathematics \& Statistics, Missouri S\&T}
\email{jason.murphy@mst.edu}

\begin{abstract} We consider the cubic-quintic nonlinear Schr\"odinger equation in two space dimensions.  For this model, X. Cheng established scattering for $H^1$ data with mass strictly below that of the ground state for the cubic NLS.  Subsequently, R. Carles and C. Sparber utilized the pseudoconformal energy estimate to obtain scattering at the sharp threshold for data belonging to a weighted Sobolev space. In this work, we remove the weighted assumption and establish scattering at the threshold for radial data in $H^1$.
\end{abstract}

\maketitle

\section{Introduction}\label{S:intro}
We consider the cubic-quintic nonlinear Schr\"odinger equation (NLS) in two space dimensions with radial $H^1$ data:
\begin{equation}\label{nls}
\begin{cases} (i\partial_t + \Delta) u = -|u|^2u + |u|^4 u, \quad (t,x)\in\R\times\R^2, \\
u|_{t=0} = u_0\in H^1_{\text{rad}}(\R^2).
\end{cases}
\end{equation}

For initial data in $H^1$, one obtains local solutions to \eqref{nls} that conserve both the \emph{mass} and \emph{energy}, defined by  
\[
M(u) = \int |u|^2\,dx\qtq{and} E(u) = \int \tfrac12|\nabla u|^2 - \tfrac14|u|^4 + \tfrac16 |u|^6\,dx,
\]
respectively.  The questions of global well-posedness and long-time behavior for \eqref{nls} have been studied previously in \cite{Cheng, CarlesSparber} (see also \cite{CarlesSparber, KOPV, KOPV0, KMV, TVZ, Zhang} for related results in other dimensions).  In particular, the work \cite{Cheng} established scattering and $L_{t,x}^4$ spacetime bounds for initial data $u_0$ obeying the mass constraint
\begin{equation}\label{sub}
M(u_0)<M(Q),
\end{equation}
where $Q$ is the ground state for the cubic NLS, that is, the unique nonnegative, radial, decaying solution to\begin{equation}\label{def:Q}
-Q+\Delta Q+Q^3 = 0.
\end{equation}
As pointed out in \cite{Cheng, CarlesSparber}, this coincides with the sharp scattering threshold for the $2d$ cubic NLS 
\begin{equation}\label{cubic-NLS}
(i\partial_t+\Delta)w=-|w|^2 w
\end{equation}
(see \cite{Dodson, KTV}); furthermore, this threshold is also sharp for \eqref{nls}, in the sense that there exist nonlinear ground states at any mass strictly larger than that of $Q$ (cf. \cite[Theorem~1.9]{CarlesSparber}).

On the other hand, the authors of \cite{CarlesSparber} succeeded in establishing scattering for solutions obeying $M(u_0)=M(Q)$ under the additional assumption that  $xu_0\in L^2$.  With this assumption, one gains access to the pseudoconformal energy estimate, which has played an important role in the NLS scattering theory (see e.g. \cite{TsutsumiYajima, GinibreVelo}). Combining this estimate with rigidity results for the mass-critical NLS (as in \cite{Merle}), the authors of \cite{CarlesSparber} were able to obtain their result. 

In this paper, we establish scattering at the sharp mass threshold for radial data in $H^1$.  In particular, we are able to remove the weighted $L^2$ assumption appearing in \cite{CarlesSparber}.  Of course, the radial assumption also guarantees some degree of spatial localization, which continues to play an important role in this work.  The extension to non-radial $H^1$ data remains an interesting open problem and will be discussed briefly below. 

Our main result is the following.

\begin{theorem}[Threshold scattering]\label{T} Let $u_0\in H^1_{\text{rad}}(\R^2)$ satisfy 
\begin{equation}\label{mass}
M(u_0)=M(Q).
\end{equation}
Then the corresponding solution $u$ to \eqref{nls} is global in time and obeys 
\begin{equation}\label{stb}
\|u\|_{L_t^\infty H_x^1(\R\times\R^2)}+\|u\|_{L_{t,x}^4(\R\times\R^2)}<\infty.
\end{equation}
Consequently, $u$ scatters; that is, there exist $u_\pm\in H^1$ such that
\[
\lim_{t\to\pm\infty}\|u(t)-e^{it\Delta}u_\pm\|_{H^1(\R^2)}=0. 
\]
\end{theorem}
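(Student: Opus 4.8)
The plan is to follow the now-standard concentration-compactness/rigidity scheme of Kenig–Merle, adapted to the threshold (critical mass) setting. The subthreshold result of \cite{Cheng} provides global spacetime bounds for all data with $M(u_0) < M(Q)$; the goal is to promote this to the endpoint $M(u_0) = M(Q)$. The argument is by contradiction: if Theorem~\ref{T} fails, then there is a data set at mass exactly $M(Q)$ for which the solution has infinite $L_{t,x}^4$ norm (forward or backward in time). Using a suitable profile decomposition for sequences of $H^1_{\mathrm{rad}}$ data bounded in mass by $M(Q)$, together with the stability theory for \eqref{nls} and the subthreshold bounds of \cite{Cheng}, one extracts a minimal blow-up solution: a global (or maximal-lifespan) solution $u_c$ with $M(u_c) = M(Q)$, $\|u_c\|_{L_{t,x}^4} = \infty$, and whose orbit $\{u_c(t)\}$ is precompact in $H^1_{\mathrm{rad}}$ modulo the symmetries of the equation. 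Since \eqref{nls} is not scale-invariant, the only relevant symmetries are translations in space; but radiality pins the spatial center, so in fact $\{u_c(t) : t \in \R\}$ is precompact in $H^1(\R^2)$ outright. In particular, for every $\eta > 0$ there is $R(\eta)$ with $\sup_t \int_{|x| > R(\eta)} |\nabla u_c|^2 + |u_c|^2\,dx < \eta$.

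The heart of the matter is the rigidity step: show that such a $u_c$ cannot exist. Here the mechanism specific to the cubic-quintic model enters. The critical-mass threshold $M(Q)$ is dictated entirely by the cubic term; the defocusing quintic term is a perturbation that only helps coercivity. One shows, using the sharp Gagliardo–Nirenberg inequality (whose optimizer is $Q$) together with $M(u_c) = M(Q)$, that $E(u_c) \geq 0$, and more precisely that the energy controls $\|\nabla u_c\|_{L^2}^2$ and $\|u_c\|_{L^6}^6$ up to harmless factors; in particular the solution cannot concentrate like a mass-critical blow-up bubble, because the ground-state profile $Q$ (which is the only way to saturate the cubic Gagliardo–Nirenberg inequality at mass $M(Q)$) has nonzero quintic energy, forcing $E(u_c) > 0$ strictly unless $u_c \equiv 0$. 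The compactness of the orbit then feeds into a virial/Morawetz estimate: using the truncated virial functional $\int \chi(x/R)|x|^2 |u_c|^2\,dx$ with a cutoff adapted to the compactness radius, one computes that its second time derivative is bounded below by a positive multiple of $E(u_c)$ minus errors that vanish as $R \to \infty$ thanks to the uniform spatial localization. Integrating in time forces $E(u_c) = 0$, hence $u_c \equiv 0$, contradicting $\|u_c\|_{L_{t,x}^4} = \infty$.

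The main obstacle I anticipate is the rigidity/virial step rather than the (by now routine) construction of the minimal blow-up solution. Two points require care. First, the virial identity must be truncated because $xu_c$ is not known to be in $L^2$ — this is precisely the assumption removed relative to \cite{CarlesSparber} — so one must control the commutator and boundary terms generated by the cutoff, and it is the precompactness of the orbit (via the tightness estimate above) that makes these terms negligible; getting a clean sign on the leading term, with the quintic contribution cooperating rather than obstructing, is the delicate computation. Second, one must verify that at the threshold mass the Gagliardo–Nirenberg deficit genuinely produces a \emph{strictly} positive lower bound on the energy for nonzero solutions in the precompact orbit — a uniform-in-time statement — which again uses compactness to upgrade the pointwise-in-time coercivity to a uniform one. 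Once $E(u_c) = 0$ is forced, a variational argument identifying the zero-energy, mass-$M(Q)$ states (there are none but $0$, since $Q$ itself has positive energy in this model) closes the contradiction, and the scattering statement \eqref{stb} follows in the usual way from the resulting global $L_{t,x}^4$ bound together with the $H^1$ conservation laws.
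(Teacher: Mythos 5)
Your overall architecture (concentration compactness plus a virial rigidity argument, with the sharp Gagliardo--Nirenberg inequality giving $E(u)\geq\tfrac16\|u\|_{L^6}^6>0$ at threshold mass) points in the right direction, but there is a genuine gap at the decisive step: you assert that, because \eqref{nls} is not scale invariant and radiality pins the spatial center, the minimal non-scattering solution has orbit precompact in $H^1(\R^2)$ \emph{outright}, with no modulation parameter. This is not justified, and it is precisely the main difficulty of the threshold problem. The relevant profile decomposition here is the mass-critical ($L^2$-level) one, and even though the equation is not scale invariant, profiles with scales $\lambda_n\to\infty$ cannot be excluded: at large scales the dynamics are governed by the cubic NLS \eqref{cubic-NLS}, and such profiles must be handled by approximation with mass-critical solutions and Dodson's theorem (this is Lemma~\ref{L:approximation} in the paper, following \cite{Cheng}). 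As a consequence, what one actually obtains is precompactness of $\{\lambda(t)u(t,\lambda(t)x)\}$ only in the $L^2$ topology, modulo a scale function $\lambda(t)\geq 1$ which may be unbounded (Proposition~\ref{P:compact}). Your claim of $H^1$ compactness at a fixed spatial scale assumes away exactly the scenario (the solution spreading out toward a rescaled cubic-NLS soliton) that makes the threshold case hard; with it granted, the problem would indeed reduce to a routine truncated virial argument, but nothing in your proposal proves it.

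Because of this, your rigidity step does not close as written: the truncated virial errors involve the kinetic energy, $L^4$ and $L^6$ norms outside radius $R$, and mere $L^2$ compactness modulo an unbounded scale gives no uniform smallness of $\int_{|x|>R}|\nabla u|^2$ at any fixed $R$. The paper's route around this is structurally different and is where the real work lies: (i) an upper bound $\lambda(t)\lesssim t^{1/2}$ for the scale (Lemma~\ref{L:bound-lambda}), (ii) a radial virial/Morawetz estimate requiring no compactness at all, which yields \emph{energy evacuation} along a sequence $t_n\to\infty$ at radii $\sim\lambda(t_n)$ (Proposition~\ref{P:virial}), and (iii) a separate \emph{kinetic energy localization} for compact solutions at radii $\sim\lambda(t)$ (Proposition~\ref{P:localization}), whose proof is an entire section relying on the reduced Duhamel formula, the in/out decomposition of \cite{KTV}, a radial Strichartz estimate, and a quantitative frequency-decay lemma. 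Combining (ii) and (iii) with conservation of energy forces $E(u)\leq 0$, contradicting \eqref{positive-energy}. If you want to salvage your outline, you must either prove the $H^1$ tightness you invoked (which in effect amounts to reproving Proposition~\ref{P:localization}, and even then only relative to $\lambda(t)$), or restructure the rigidity step along the evacuation-plus-localization lines above.
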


The proof of Theorem~\ref{T} boils down to establishing the $L_{t,x}^4$ estimate.  To see this, we first observe that the sharp Gagliardo--Nirenberg inequality in two dimensions may be written as follows (cf. \cite{Weinstein}): 
\[
\|u\|_{L^4(\R^2)}^4 \leq 2\tfrac{M(u)}{M(Q)}\|\nabla u\|_{L^2(\R^2)}^2. 
\]
Thus, for solutions obeying \eqref{mass}, we have
\begin{equation}\label{positive-energy}
E(u) \geq \tfrac16 \|u\|_{L^6}^6>0.
\end{equation}
In particular, solutions obey uniform bounds in $L^2$ and $L^6$ and hence in $L^4$.  This in turn yields uniform bounds in $\dot H^1$ by the conservation of energy. Consequently, by the local theory for \eqref{nls}, any solution obeying \eqref{mass} is global in time and uniformly bounded in $H^1$.  As standard arguments show that the bounds in \eqref{stb} imply scattering (see e.g. \cite{TVZ, Cheng}), we see that the proof of Theorem~\ref{T} indeed reduces to establishing the $L_{t,x}^4$ estimate in \eqref{stb}. 

Along with its clear connection to the works \cite{Cheng, CarlesSparber} and more generally to \cite{KOPV, KMV}, Theorem~\ref{T} may also be considered in the context of some recent works on threshold scattering for the $3d$ cubic NLS \cite{DLR, MMZ}.  In \cite{DLR}, the authors established a scattering result at the sharp threshold for the $3d$ focusing cubic NLS  outside of a convex obstacle, while \cite{MMZ} proved an analogous result in the presence of a repulsive external potential.  In both cases, the scattering threshold (or, more precisely, the threshold for obtaining uniform space-time bounds) coincides with that of the underlying cubic NLS on $\R^3$.  One finds that in the presence of a repulsive obstacle or potential, it is possible to obtain a scattering result even at the threshold.  This is in contrast to the case of the standard NLS or the NLS with an attractive potential, for which other threshold behaviors are possible (see e.g. \cite{DM, DR, YZZ}).  Our main result (as well as the scattering result of \cite{CarlesSparber}) is therefore analogous to the results of \cite{DLR, MMZ}, with the defocusing quintic term having a similar effect as that of a repulsive potential. 

In the remainder of the introduction, we will briefly discuss the strategy of the proof of Theorem~\ref{T}, as well as the possibility of extending Theorem~\ref{T} to the non-radial setting.  Without loss of generality, we consider scattering in the forward direction only.

\emph{Compactness for non-scattering solutions.}  In Section~\ref{S:compact}, we first show that if $u$ is a global, radial, $H^1$-bounded solution to \eqref{nls} with infinite $L_{t,x}^4$-norm on $[0,\infty)\times\R^2$, then the orbit of $u$ must be pre-compact in $L^2$ modulo some scale function (see Proposition~\ref{P:compact}). This step follows the familiar concentration-compactness approach, with the sub-threshold scattering result of \cite{Cheng} guaranteeing that the solution cannot split into multiple profiles.  As in the work of \cite{Cheng}, this step also utilizes an approximation of \eqref{nls} by \eqref{cubic-NLS} in the large scale limit.  The role of the radial assumption is to remove the need for a moving spatial center in the parametrization of the solution.

\emph{Energy evacuation.}  In Section~\ref{S:evacuation},  we utilize a virial/Morawetz estimate (essentially the same one used in \cite{ADM}) to prove an `energy evacuation' property for radial, $H^1$-bounded solutions to \eqref{nls} (see Proposition~\ref{P:virial}).  This does not rely on compactness in the sense of the previous step; rather, one simply relies on the radial assumption to obtain tightness in $L^4$ and $L^6$ via the radial Sobolev embedding.  This step shows that along some sequence of times $t_n\to\infty$, the solution must concentrate its energy in the kinetic energy component and increasingly far from the origin.  (In works such as \cite{ADM, DodMur}, this type of condition is used directly to derive scattering; however, this approach breaks down in the present setting due to the presence of the mass-critical nonlinearity in \eqref{nls}.) 

\emph{Energy localization.}  In Section~\ref{S:localization}, we prove a tightness property for the kinetic energy of compact solutions (see Proposition~\ref{P:localization}).  This does not follow immediately Proposition~\ref{P:compact}, as the compactness there is obtained only in the $L^2$ topology.  Here we follow the lead of \cite{KLVZ, LiZhang}, which established similar localization results in the radial mass-critical setting.  The main ingredients are a reduced Duhamel formula for compact solutions (Corollary~\ref{C:reduced-duhamel}) and an `in/out' decomposition for radial functions in $L^2$ (as introduced in \cite{KTV}), which together allow us to prove a quantitative frequency decay estimate (Lemma~\ref{L:frequency-decay}).  This part of the argument leans heavily on the radial assumption,  utilizing the in/out decomposition as well as the radial Sobolev embedding and a radial Strichartz estimate.

\emph{Conclusion.} In Section~\ref{S:conclusion}, we put together the pieces and prove the main result as follows:  The energy evacuation property (Proposition~\ref{P:virial}) shows that along a sequence $t_n\to\infty$, solutions have very little energy at bounded radii, while the energy localization property (Proposition~\ref{P:localization}) shows that compact solutions have very little energy at large radii (uniformly in time).  Combining these two properties and utilizing the conservation of energy, we can prove that compact solutions in fact have zero energy, which is incompatible with \eqref{positive-energy}.  Thus compact solutions cannot exist and so (by Proposition~\ref{P:compact}) Theorem~\ref{T} is proved.     

To close the introduction, let us point out that in contrast to \cite{CarlesSparber} (which also considered \eqref{nls}) and to \cite{DLR, MMZ} (which considered the $3d$ cubic NLS with repulsive obstacle/potential), it is not necessary in this work to incorporate any type of `modulation analysis', that is, to obtain a precise description of the solution as it approaches the orbit of $Q$.  We do, of course, rely on the sharp Gagliardo--Nirenberg inequality, but we do not need any refinements thereof.  The need for such analysis is ultimately obviated by the fact that we can obtain kinetic energy localization solely by relying on the radial assumption.   The reader will notice, for example, that this part of the argument is completely insensitive to the combination of signs in the nonlinearity.  In the non-radial case, on the other hand, it is less clear how one might obtain compactness at the $\dot H^1$ level.  It seems likely that some kind of modulation analysis may again play an important role.

\section{Notation and preliminaries}\label{S:notation}

We write $A\lesssim B$ to denote the inequality $A\leq CB$ for some $C>0$.  We utilize the standard Lebesgue and Sobolev spaces, as well as the $L_t^q L_x^r$ notation for mixed Lebesgue space-time norms.  We denote the usual Littlewood--Paley frequency projections by $P_N$, $P_{\leq N}$, $P_{>N}$, and so on, and we denote the `fattened' operators by $\tilde P_N$, so that (for example) $\tilde P_N P_N = P_N$.  We also make use of the standard Bernstein estimates associated to these operators.

Throughout the paper, we use the following radial Sobolev embedding estimate, which may be proved by the fundamental theorem of calculus and Cauchy--Schwarz (see also \cite{Strauss}):
\[
\| |x|^{\frac12}u\|_{L^\infty(\R^2)} \lesssim \|u\|_{L^2(\R^2)}^{\frac12}\|u\|_{\dot H^1(\R^2)}^{\frac12}\qtq{for}u\in H^1_{\text{rad}}(\R^2).
\]
Here and throughout we use the subscript ${}_{\text{rad}}$ to emphasize the restriction to radial functions. 

We use the notation $\chi_R$ to denote a smooth cutoff to the set $\{|x|\leq R\}$. To save space in formulas, we sometimes use the notation
\[
\chi_R^c(x):=1-\chi_R(x),
\]
where ${}^c$ is meant to connote `complement'.

\subsection{Local theory} In this section we briefly review the local theory for \eqref{nls} (see e.g. \cite{Cheng} for more details).  

The equation \eqref{nls} admits local solutions for any initial data in $H^1$, and (as described in the introduction) we obtain global existence and $H^1$ bounds for solutions obeying $M(u)\leq M(Q)$.  We may also construct solutions scattering to prescribed asymptotic states as $t\to\pm\infty$.  More generally, we obtain that any $H^1$ bounded solution belonging to $L_{t,x}^4(\R\times\R^2)$ scatters in both time directions in $H^1$.

We will also make use of the following stability result for \eqref{nls} (see e.g. Proposition~3.3 in \cite{Cheng}).

\begin{proposition}[Stability]\label{P:stab} Suppose $w:I\times\R^2\to\C$ solves
\[
(i\partial_t + \Delta)w = -|w|^2 w + |w|^4 w + e,\quad w(t_0)=w_0
\]
for some function $e:I\times\R^2\to\C$.  Let $u_0\in H^1$ and suppose that
\[
\|w_0\|_{H^1}+\|u_0\|_{H^1} \leq E \qtq{and} \|\langle \nabla\rangle^{\frac12} w\|_{L_{t,x}^4(I\times\R^2)} \leq L. 
\]
There exists $\eps_0=\eps_0(E,L)>0$ so that if $0<\eps<\eps_0$ and
\[
\|w_0-u_0\|_{H^1} + \|\langle \nabla\rangle^{\frac12}e\|_{L_{t,x}^{\frac43}(I\times\R^2)} < \eps,
\]
then there exists a solution $u:I\times\R^2\to\C$ to \eqref{nls} with $u(t_0)=u_0$ satisfying
\[
\|\langle\nabla\rangle^{\frac12}[u-w]\|_{L_{t,x}^4(I\times\R^2)}\lesssim_{E,L}\eps \qtq{and}\|\langle \nabla\rangle^{\frac12}u\|_{L_{t,x}^4(I\times\R^2)}\lesssim_{E,L} 1. 
\]
\end{proposition}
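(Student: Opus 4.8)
The plan is to run the standard perturbative argument: reduce to a short-time stability statement on subintervals where the spacetime norm of $w$ is small, and then iterate finitely many times to cover $I$. Write $v=u-w$ for the (a priori unknown) difference, so that formally $v(t_0)=u_0-w_0$ and
\[
(i\partial_t+\Delta)v=F(w+v)-F(w)-e,\qquad F(z):=-|z|^2z+|z|^4z,
\]
equivalently
\[
v(t)=e^{i(t-t_0)\Delta}(u_0-w_0)-i\int_{t_0}^t e^{i(t-s)\Delta}\bigl[F(w+v)-F(w)-e\bigr]\,ds.
\]
All estimates are carried out at the regularity level $\langle\nabla\rangle^{1/2}$, which is the natural one here (the cubic term is mass-critical and the quintic term is $\dot H^{1/2}$-critical in two dimensions), using the Strichartz-admissible pair $(4,4)$ and its dual $(\tfrac43,\tfrac43)$. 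Throughout we use that $w$, and (once it is constructed) $u$, obey $\|\langle\nabla\rangle w\|_{S(I)}+\|w\|_{L_t^\infty H_x^1(I)}\lesssim_{E,L}1$, where $S(J)$ denotes an appropriate sum of Strichartz norms on $J$; this follows from the hypotheses by the local theory, subdividing $I$ according to the $L_{t,x}^4$-norm of $\langle\nabla\rangle^{1/2}w$ and iterating Strichartz estimates.

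The core is a short-time claim. Partition $I$ into $J=J(E,L)$ consecutive subintervals $I_j=[t_j,t_{j+1}]$ on each of which $\|\langle\nabla\rangle^{1/2}w\|_{L_{t,x}^4(I_j)}\le\delta$, for a small $\delta=\delta(E)$. The claim is that if $\|v(t_j)\|_{H^1}\le\delta'$ and $\|\langle\nabla\rangle^{1/2}e\|_{L_{t,x}^{4/3}(I_j)}\le\delta'$ with $\delta'=\delta'(E,\delta)$ sufficiently small, then $u$ extends to all of $I_j$ with $\|\langle\nabla\rangle^{1/2}v\|_{S(I_j)}\lesssim\delta'$ and $\|\langle\nabla\rangle^{1/2}u\|_{S(I_j)}\lesssim_{E,\delta}1$. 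One proves this by a continuity/bootstrap argument: applying Strichartz to the Duhamel formula for $v$ gives
\[
\|\langle\nabla\rangle^{1/2}v\|_{S(I_j)}\lesssim\|v(t_j)\|_{H^1}+\bigl\|\langle\nabla\rangle^{1/2}[F(w+v)-F(w)]\bigr\|_{L_{t,x}^{4/3}(I_j)}+\bigl\|\langle\nabla\rangle^{1/2}e\bigr\|_{L_{t,x}^{4/3}(I_j)},
\]
and the nonlinear difference is estimated via the pointwise bound $|F(w+v)-F(w)|\lesssim(|w|^2+|v|^2)|v|+(|w|^4+|v|^4)|v|$ together with the fractional product and chain rules. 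The cubic part is dominated by $(\|\langle\nabla\rangle^{1/2}w\|_{L_{t,x}^4(I_j)}^2+\|\langle\nabla\rangle^{1/2}v\|_{L_{t,x}^4(I_j)}^2)\|\langle\nabla\rangle^{1/2}v\|_{L_{t,x}^4(I_j)}$ by H\"older in the pair $(4,4)$, hence small once $\delta$ and $\delta'$ are small; the quintic part is handled in the same spirit, but one additionally uses the uniform $H^1$-bounds on $w$ and $u$, Sobolev embedding, and the other exponents available in $S(I_j)$ to place the four extra (low-derivative) factors. Taking $\delta$ small relative to $E$, then $\delta'$ small relative to $\delta$ and $E$, lets us absorb the nonlinear terms and close the bootstrap; this also yields the difference estimate of the proposition localized to $I_j$.

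It remains to iterate over the $J=J(E,L)$ subintervals. At the right endpoint of $I_j$, the bound on $\|\langle\nabla\rangle^{1/2}v\|_{S(I_j)}$ together with the Duhamel formula and Strichartz controls $\|v(t_{j+1})\|_{H^1}$ by $C(E,\delta)(\|v(t_j)\|_{H^1}+\eps)$; iterating gives a geometric-type accumulation whose total remains bounded provided $\eps<\eps_0(E,L)$ is small enough that $\|v(t_j)\|_{H^1}\le\delta'$ persists at every step. Summing the per-interval estimates then produces $\|\langle\nabla\rangle^{1/2}[u-w]\|_{L_{t,x}^4(I)}\lesssim_{E,L}\eps$ and $\|\langle\nabla\rangle^{1/2}u\|_{L_{t,x}^4(I)}\lesssim_{E,L}1$, as claimed. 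The step I expect to be the main obstacle is the quintic nonlinear difference estimate: unlike its cubic counterpart, it is not closed by $L_{t,x}^4$-control of $\langle\nabla\rangle^{1/2}w$ and $\langle\nabla\rangle^{1/2}v$ alone, so one must carefully combine this with the uniform $H^1$ bounds (hence with other Strichartz exponents, via interpolation and Sobolev embedding) and keep track of how the half-derivative is distributed across the five factors; the remainder is routine bookkeeping with Strichartz estimates and the finite iteration.
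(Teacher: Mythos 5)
Your proposal is the standard short-time stability argument (subdivide $I$ so that $\|\langle\nabla\rangle^{1/2}w\|_{L_{t,x}^4}$ is small on each piece, close a bootstrap at the $\langle\nabla\rangle^{1/2}$ level with the pair $(4,4)$, then iterate finitely many times), which is exactly how this result is proved in the reference the paper points to for it (Proposition~3.3 of \cite{Cheng}); the paper itself gives no proof beyond that citation. Your flagged concern about the quintic difference is handled just as you suggest, e.g.\ via $\|u\|_{L_{t,x}^8}\lesssim \|\langle\nabla\rangle^{1/2}u\|_{L_{t,x}^4}^{1/2}\|u\|_{L_t^\infty H_x^1}^{1/2}$, and since the nonlinearity is polynomial the difference $F(w+v)-F(w)$ expands into monomials each containing a factor of $v$, so no fractional chain rule for differences is needed.
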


\subsection{Concentration-compactness}

We record here a linear profile decomposition adapted to the $L^2(\R^2)\to L_{t,x}^4$ Strichartz estimate for $e^{it\Delta}$.  Such a result was established originally in \cite{MerleVega} for the two-dimensional case.  We will need the following decomposition for radial, $H^1$-bounded sequences:

\begin{proposition}[Linear profile decomposition]\label{P:LPD} Let $u_n$ be a bounded sequence in $H^1_{\text{rad}}$.  Then the following holds up to a subsequence:

There exist $J^*\in\{0,1,2,\dots,\infty\}$, non-zero profiles $\{\phi^j\}_{j=1}^{J^*}\subset L^2_{\text{rad}}$, and parameters $(t_n^j,\lambda_n^j)$  satisfying the following:

For each finite $0\leq J\leq J^*$, we have
\[
u_n = \sum_{j=1}^J \phi_n^j + r_n^J,\qtq{where} \phi_n^j = e^{it_n^j\Delta}\bigl\{\tfrac{1}{\lambda_n^j}(P_n^j\phi^j)(\tfrac{\cdot}{\lambda_n^j})\bigr\},
\]
with $\lambda_n^j\equiv 1$ or $\lambda_n^j\to \infty$, $t_n^j \equiv 0$ or $(\lambda_n^j)^{-2} t_n^j\to\pm\infty$, and
\[
P_n^j := \begin{cases} P_{\leq [\lambda_n^j]^\theta} & \lambda_n^j \to\infty \\ \text{Id} & \lambda_n^j\equiv 1\end{cases}
\]
for some $\theta\in(0,1)$.  In addition, if $\lambda_n^j\equiv 1$ then $\phi^j\in H^1$.

For each finite $0\leq J\leq J^*$, we have the following decoupling properties:
\begin{align*}
&\lim_{n\to\infty} \bigl\{ \| u_n\|_{\dot H^s}^2 - \sum_{j=1}^J \| \phi_n^j\|_{\dot H^s}^2 - \| r_n^J\|_{\dot H^s}^2\bigr\} = 0,\quad s\in\{0,1\}, \\
& \lim_{n\to\infty} \bigl\{ \|f_n\|_{L^p}^p - \sum_{j=1}^J \|\phi_n^j\|_{L^p}^p - \|r_n^J\|_{L^p}^p \bigr\} = 0,\quad p\in\{4,6\}. 
\end{align*}

The remainder obeys
\[
\limsup_{J\to J^*}\limsup_{n\to\infty} \|\langle \nabla\rangle^{\frac12} e^{it\Delta} r_n^J \|_{L_{t,x}^4(\R\times\R^2)}=0.
\]

Finally, the parameters $(t_n^j,\lambda_n^j)$ are asymptotically orthogonal in the following sense: for any $j\neq k$, 
\[
\lim_{n\to\infty} \bigl\{ |\log\tfrac{\lambda_n^j}{\lambda_n^k}| + \tfrac{|t_n^j-t_n^k|}{(\lambda_n^j)^2}\bigr\} = \infty. 
\]
\end{proposition}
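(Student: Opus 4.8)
The plan is to follow the standard inductive bubble-extraction scheme for Strichartz profile decompositions (as in \cite{MerleVega, KTV}), keeping careful track of the extra $\dot H^1$ information carried by the sequence. The engine of the proof is a radial \emph{inverse Strichartz inequality}: if $\|f_n\|_{L^2}\le A$ and $\liminf_n\|e^{it\Delta}f_n\|_{L_{t,x}^4}\ge\eps>0$, then, passing to a subsequence, there exist a dyadic frequency $N_n$, a time $t_n$, and a nonzero $\phi\in L^2_{\text{rad}}$ so that, with $\lambda_n:=N_n^{-1}$, the rescaled functions $g_n:=\lambda_n^{-1}(e^{-it_n\Delta}f_n)(\cdot/\lambda_n)$ converge weakly in $L^2$ to $\phi$, with $\|\phi\|_{L^2}\gtrsim\eps^\alpha$, the $L^2$-decoupling $\|f_n\|_{L^2}^2-\|\phi_n\|_{L^2}^2-\|f_n-\phi_n\|_{L^2}^2\to0$ (where $\phi_n:=e^{it_n\Delta}\{\lambda_n^{-1}\phi(\cdot/\lambda_n)\}$), and a linear Strichartz gain $\limsup_n\|e^{it\Delta}(f_n-\phi_n)\|_{L_{t,x}^4}^4\le\limsup_n\|e^{it\Delta}f_n\|_{L_{t,x}^4}^4-c\eps^\beta$. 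I would prove this by interpolating the $L_{t,x}^4$ Strichartz estimate against a single-frequency-block bound (available in the radial class via the radial Strichartz and radial Sobolev estimates, with no frequency-translation parameter needed), which produces a block $P_{N_n}f_n$ with $|(e^{it_n\Delta}P_{N_n}f_n)(0)|\gtrsim\eps^\alpha N_n$; the weak limit $\phi$ and the stated properties are then extracted as in \cite{MerleVega, KTV}.

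Next I would record how the parameters are normalized and how the $H^1$ bound is carried along. Since $\|P_{N_n}f_n\|_{\dot H^1}\sim N_n\|P_{N_n}f_n\|_{L^2}$ and $\|P_{N_n}f_n\|_{L^2}\gtrsim\eps^\alpha$, an $H^1$-bound on $f_n$ forces $N_n\lesssim_\eps 1$, i.e. $\lambda_n$ is bounded below; passing to a subsequence, $\lambda_n\to\lambda_\infty\in(0,\infty]$ and $\lambda_n^{-2}t_n$ either converges or tends to $\pm\infty$. If $\lambda_n^{-2}t_n\to\tau\in\R$, replace $\phi$ by $e^{i\tau\Delta}\phi$ and reset $t_n\equiv0$. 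If $\lambda_\infty<\infty$, rescale to set $\lambda_n\equiv1$; then $\phi_n\rightharpoonup\phi$ in $L^2$ while the $H^1$-bound upgrades the limit to $\phi\in H^1$, and no frequency truncation is needed. If $\lambda_\infty=\infty$, insert $P_n:=P_{\le\lambda_n^\theta}$ for a fixed $\theta\in(0,1)$: since $P_n\phi\to\phi$ in $L^2$ the truncation is invisible to the $L^2$ statements, whereas $\|\lambda_n^{-1}(P_n\phi)(\cdot/\lambda_n)\|_{\dot H^1}\lesssim\lambda_n^{\theta-1}\|\phi\|_{L^2}\to0$, so the large-scale bubbles carry no $\dot H^1$-mass asymptotically. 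This is exactly the structure asserted in the Proposition.

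The decomposition then follows by iteration. Apply the above to $u_n$ to obtain $(\phi^1,t_n^1,\lambda_n^1)$ and set $r_n^1:=u_n-\phi_n^1$; if $\limsup_n\|\langle\nabla\rangle^{\frac12}e^{it\Delta}r_n^1\|_{L_{t,x}^4}=0$ set $J^*=1$ and stop, otherwise apply the procedure to $r_n^1$, and continue. The remainders stay $H^1$-bounded, and the $L^2$-decouplings combine to yield $\sum_{j\le J}\|\phi^j\|_{L^2}^2\le\liminf_n\|u_n\|_{L^2}^2$ for each finite $J$, so $\|\phi^j\|_{L^2}\to0$; since the $j$-th extraction costs $L^2$-mass bounded below by a power of the residual Strichartz norm at that stage, the residual norm tends to $0$ as $J\to J^*$, and one upgrades the plain $L_{t,x}^4$ gain to a $\langle\nabla\rangle^{\frac12}L_{t,x}^4$ gain using Bernstein and the frequency localization of the bubbles relative to their scales. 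Parameter orthogonality is obtained by contradiction: if $|\log(\lambda_n^j/\lambda_n^k)|+(\lambda_n^j)^{-2}|t_n^j-t_n^k|$ stayed bounded for some $j<k$, then unscaling and untranslating $r_n^{k-1}$ by the $k$-th parameters would produce a weak limit that is simultaneously $0$ (by the $j$-th extraction, the $j$-th bubble having already been removed) and of norm $\|\phi^k\|_{L^2}>0$. The $\dot H^0$ decoupling is built into the construction, and the $\dot H^1$ decoupling follows by combining the vanishing $\dot H^1$-mass of the large-scale bubbles with $\dot H^1$-orthogonality of the unit-scale bubbles and the remainder (weak $H^1$-convergence plus parameter orthogonality). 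Finally, the $L^4$ and $L^6$ decouplings reduce, via a Brezis--Lieb/refined-Fatou argument, to the vanishing of the cross terms $\int|\phi_n^j|^a|\phi_n^k|^b$ ($j\ne k$) and $\int|\phi_n^j|^a|r_n^J|^b$ with $a+b\in\{4,6\}$, $a,b\ge1$; these vanish from scale mismatch when $\lambda_n^j/\lambda_n^k\to0$ or $\infty$, and from dispersive decay together with local compactness (Rellich, using the uniform $H^1$-bound) when the scales are comparable but the times separate.

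The step requiring the most care is the radial inverse Strichartz inequality together with the $H^1$-bookkeeping: producing the single-bubble refinement of the $L_{t,x}^4$ Strichartz estimate without a frequency-translation parameter, checking that the extracted scales cannot degenerate to $0$ under the $H^1$-bound, and inserting the truncations $P_n^j$ so that the resulting decomposition is simultaneously compatible with the $L^2$, $L^4$, $L^6$, and $\dot H^1$ structure of the sequence. Once this is in place, the iteration and the soft extraction of orthogonality and of the remaining decouplings are routine.
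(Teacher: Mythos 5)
Your proposal is correct in outline and takes essentially the same route the paper itself relies on: the paper does not prove Proposition~\ref{P:LPD} but invokes \cite[Theorem~4.2]{Cheng} together with the radial reduction of \cite[Section~7]{TVZ-mass}, which is precisely the inverse-Strichartz bubble extraction with $H^1$ bookkeeping, the frequency truncations $P_{\leq[\lambda_n^j]^\theta}$ at large scales, and the elimination of translations and boosts by radial symmetry that you describe. The one step you state more casually than the sources justify is pinning the spatial center at $x=0$ (this comes from the radial replication/averaging argument of \cite{TVZ-mass, KTV}, not merely from radial Sobolev/Strichartz bounds), but since you delegate that extraction to \cite{MerleVega, KTV} this matches the level of detail the paper itself provides.
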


A similar result appears in \cite[Theorem~4.2]{Cheng} without the radial assumption.  In that setting, one must contend with a more complicated group of symmetries, including spatial translations $x_n^j$ and boosts $\xi_n^j$.  In the radial setting, these parameters may be taken to be identically zero, and the profiles themselves may be taken to be radial functions (although this last point is less essential for our purposes).  To see this, one may follow the arguments presented in \cite[Section~7]{TVZ-mass}, particularly the proof of Theorem~7.3 therein. 

\section{Compactness for non-scattering solutions}\label{S:compact}

In this section, we show that if $u$ is a radial solution obeying \eqref{mass}, but $u$ fails to scatter, then $u$ must exhibit some compactness in $L^2$. 

\begin{proposition}[Compactness]\label{P:compact}  Suppose $u$ is a radial, forward-global solution to \eqref{nls} obeying $M(u)=M(Q)$,
\[
\|u\|_{L_t^\infty H_x^1([0,\infty)\times\R^2)}\lesssim 1 ,\qtq{and} \|u\|_{L_{t,x}^4([0,\infty)\times\R^2)}=\infty. 
\]
Then there exists $\lambda:[0,\infty)\to[1,\infty)$ such that
\begin{equation}\label{compact}
\{\lambda(t)u(t,\lambda(t)x):t\in[0,\infty)\} \qtq{is pre-compact in}L^2(\R^2). 
\end{equation}
\end{proposition}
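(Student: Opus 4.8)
The plan is to follow the standard concentration-compactness/rigidity framework, producing a minimal-mass (indeed critical-mass, since $M(u)=M(Q)$ is forced) non-scattering solution and then extracting compactness from the linear profile decomposition of Proposition~\ref{P:LPD}. First I would argue by contradiction: assuming \eqref{compact} fails for every choice of scale function $\lambda$, there must exist a sequence of times $t_n\to\infty$ such that the rescaled solutions $\{\lambda u(t_n,\lambda\,\cdot\,)\}_{\lambda\geq 1}$ do not form a pre-compact set in $L^2$. Since $\|u\|_{L_t^\infty H_x^1}\lesssim 1$, the sequence $u(t_n)$ is bounded in $H^1_{\mathrm{rad}}$, so I may apply Proposition~\ref{P:LPD} to obtain profiles $\phi^j$, parameters $(s_n^j,\lambda_n^j)$, and a decomposition $u(t_n)=\sum_{j=1}^J\phi_n^j+r_n^J$ with the stated $\dot H^s$ ($s=0,1$) and $L^p$ ($p=4,6$) decoupling and the Strichartz-smallness of the remainder.

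The heart of the argument is to show that exactly one profile is present, that its time parameter is zero, and that the remainder vanishes. The mass decoupling $M(u)=M(Q)=\sum_j M(\phi^j)+\lim_n M(r_n^J)$ together with the energy decoupling (using that the energy is, up to the rescalings, asymptotically additive across profiles — here one must be slightly careful because the cubic and quintic terms scale differently, so profiles with $\lambda_n^j\to\infty$ contribute only their kinetic energy in the limit, while the mass-critical term's contribution to the energy is governed by the sharp Gagliardo--Nirenberg inequality) forces each profile to have mass $\leq M(Q)$ and nonnegative limiting energy, as in \eqref{positive-energy}. I would then run the usual dichotomy: if more than one profile is nontrivial, or if one profile has mass strictly below $M(Q)$, then by the stability result (Proposition~\ref{P:stab}) and the large-scale approximation of \eqref{nls} by the mass-critical cubic NLS \eqref{cubic-NLS}, each nonlinear solution associated to $\phi_n^j$ has finite $L_{t,x}^4$ norm — for sub-threshold mass this is exactly Cheng's theorem \cite{Cheng}, and for profiles escaping to infinite scale one invokes the scattering theory for \eqref{cubic-NLS} \cite{Dodson, KTV} — and then stability lets one sum the profiles to conclude $\|u\|_{L_{t,x}^4([0,\infty)\times\R^2)}<\infty$, contradicting the hypothesis. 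The cases $\lambda_n^j\equiv 1$ versus $\lambda_n^j\to\infty$ and $s_n^j\equiv 0$ versus $|s_n^j|(\lambda_n^j)^{-2}\to\infty$ are handled by the usual nonlinear-profile/perturbation constructions (in the latter case one evolves the linear flow to near the profile's natural time and uses small-data or scattering theory). The upshot is that $J^*=1$, $\phi:=\phi^1$ carries the full mass $M(Q)$, the remainder $r_n^1\to 0$ in $H^1$ (hence in $L^2$), and one may normalize so that the single time parameter satisfies $s_n^1\equiv 0$; moreover, since $\phi$ must itself be (after rescaling) the initial data of a non-scattering critical-mass solution, the scale parameter stays bounded below, which lets one take $\lambda(t)\geq 1$.

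With $J^*=1$ established, compactness follows: along $t_n$, up to the rescaling by $\lambda_n:=\lambda_n^1$ (which I set to be $1$ when the profile lives at unit scale and define appropriately otherwise), $\lambda_n u(t_n,\lambda_n x)\to\phi$ in $L^2$. Since the sequence $t_n\to\infty$ was arbitrary among those witnessing the failure of pre-compactness, a standard subsequence argument upgrades this to the full statement: for every $t\in[0,\infty)$ one can choose $\lambda(t)\in[1,\infty)$ so that $\{\lambda(t)u(t,\lambda(t)x)\}$ has compact closure in $L^2(\R^2)$ — concretely, one defines $\lambda(t)$ via a variational/concentration characterization (e.g.\ the scale at which a fixed fraction of the $L^2$ mass concentrates) and checks that any sequence $t_n\to\infty$ for which the orbit escaped to infinity would contradict what was just proved. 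The main obstacle I anticipate is the bookkeeping in the two-nonlinearity profile analysis: one must verify that in the large-scale limit $\lambda_n^j\to\infty$ the rescaled equation genuinely decouples to \eqref{cubic-NLS} (the defocusing quintic term becoming negligible in $L_{t,x}^4$-type norms after rescaling, thanks to its higher homogeneity and the Littlewood--Paley truncation $P_n^j=P_{\leq[\lambda_n^j]^\theta}$ built into the decomposition), and that the energy functional's lack of scaling symmetry does not obstruct the additivity needed to force nonnegativity of each profile's limiting energy — this is where the careful choice of $\theta\in(0,1)$ and the structure of the truncated profiles in Proposition~\ref{P:LPD} are used. The second, more routine, technical point is the construction and control of nonlinear profiles at the "infinite time translation" endpoint $s_n^j/(\lambda_n^j)^2\to\pm\infty$, which requires the existence of wave operators for \eqref{nls} (or for \eqref{cubic-NLS} in the rescaled regime), all of which is available from the local theory recalled above.
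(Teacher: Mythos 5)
Your proposal follows essentially the same route as the paper's proof: apply the radial linear profile decomposition to $u(t_n)$, preclude vanishing and dichotomy by combining the mass decoupling with Cheng's sub-threshold scattering \cite{Cheng}, the large-scale approximation of \eqref{nls} by the cubic NLS \eqref{cubic-NLS} (via \cite{Dodson} and the frequency truncation $P_{\leq[\lambda_n^j]^\theta}$), and the stability result (Proposition~\ref{P:stab}); then conclude a single full-mass profile with trivial time parameter and vanishing remainder, and finally define $\lambda(t)$ through a mass-concentration lower bound exactly as in the paper. One small correction: the argument only gives that the remainder vanishes in $L^2$ (not in $H^1$, since the surviving profile may live at scales $\lambda_n\to\infty$), but $L^2$ convergence is all that the proposition asserts, and the energy decoupling/nonnegativity considerations you mention are not actually needed at this stage.
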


\begin{proof} The argument follows the usual concentration-compactness approach and is similar in structure to the arguments appearing in \cite{MMZ, DLR}, borrowing some of the main ideas from \cite{Cheng}, as well.  Accordingly, we will give a fairly abbreviated presentation.  

The essential point is to show that for any $\tau_n\to\infty$, we may find a subsequence in $n$ and parameters $\lambda_n\geq 1$ so that $\lambda_n u(\tau_n,\lambda_n x)$ converges strongly in $L^2$; we will briefly discuss why this implies the existence of the scale function $\lambda(t)$ below.  

To begin, we apply the linear profile decomposition (Proposition~\ref{P:LPD}) to obtain the following for any $0\leq J\leq J^*$:
\[
u_n:=u(\tau_n) = \sum_{j=1}^J \phi_n^j + r_n^J,\quad \phi_n^j = e^{it_n^j\Delta}\bigl\{ \tfrac{1}{\lambda_n^j}(P_n^j\phi)(\tfrac{\cdot}{\lambda_n^j})\bigr\}
\]
We consider three possibilities: $J^*=0$ (vanishing), $J^1=1$ (compactness), or $J^*\geq 2$ (dichotomy). 

If $J^*=0$, then we obtain
\[
\lim_{n\to\infty} \|e^{it\Delta} u(\tau_n)\|_{L_{t,x}^4([0,\infty)\times\R^2)}=0.
\]
Applying the stability result (Proposition~\ref{P:stab}), we find that
\[
\|u(t+\tau_n)\|_{L_{t,x}^4((0,\infty)\times\R^2)} = \|u\|_{L_{t,x}^4((\tau_n,\infty)\times\R^2)} \lesssim 1
\]
for all large $n$, which yields a contradiction.  In particular, `vanishing' does not occur.

We next suppose that $J^*\geq 2$ and again seek a contradiction.  We will use the profiles $\phi_n^j$ to build approximate solutions to \eqref{nls}.  We observe that by the mass decoupling, each $\phi_n^j$ satisfies the sub-threshold mass condition \eqref{sub}. 

If $\lambda_n^j\equiv 1$ and $t_n^j \equiv 0$, we let $v^j$ be the solution to \eqref{nls} with initial data $v^j(0)=\phi^j$.  By the results of \cite{Cheng}, this solution scatters and obeys $L_{t,x}^4$ spacetime bounds.  If $\lambda_n^j\equiv 1$ and $t_n^j\to\pm\infty$, we let $v^j$ be the solution to \eqref{nls} satisfying 
\[
\lim_{t\to\pm\infty}\|v^j-e^{it\Delta}\phi^j\|_{H^1} = 0.
\]
In either case, we then take $v_n^j(t,x)=v^j(t+t_n^j,x)$. 

If instead $\lambda_n^j\to\infty$, then we will construct a scattering solution $v_n^j$ to \eqref{nls} with $v_n^j(0)=\phi_n^j$ by approximating with a solution to \eqref{cubic-NLS} (as in \cite[Theorem~5.1]{Cheng}).  For the sake of completeness (and because this step is less standard than the rest of the argument), we provide the proof here. 

\begin{lemma}[Large scale approximation by \eqref{cubic-NLS}]\label{L:approximation}  Suppose that $M(\phi)<M(Q)$, $\lambda_n\to\infty$, and either $t_n\equiv 0$ or $\lambda_n^{-2}t_n\to\pm\infty$.  Write $P_n=P_{\leq\lambda_n^\theta}$ for some $0<\theta<1$.  Then for all $n$ sufficiently large, there exists a global, scattering solution $v_n$ to \eqref{nls} satisfying
\[
v_n(0)=\phi_n := e^{it_n\Delta}\{\tfrac{1}{\lambda_n}(P_n\phi)(\tfrac{\cdot}{\lambda_n})\}.
\]
\end{lemma}

\begin{proof} If $t_n\equiv 0$, we let $w_n$ be the solution to the cubic NLS \eqref{cubic-NLS} with $w_n(0)=P_n\phi$.  If instead $\lambda_n^{-2}t_n\to\pm\infty$, we let $w_n$ be the solution to \eqref{cubic-NLS} satisfying
\[
\|w_n-e^{it\Delta}P_n\phi\|_{L^2}\to 0 \qtq{as}t\to\pm\infty. 
\]
We observe that by the main result of \cite{Dodson} and persistence of regularity for \eqref{cubic-NLS}, we have
\[
\|w_n\|_{L_t^\infty L_x^2\cap L_{t,x}^4(\R\times\R^2)} \lesssim 1\qtq{and} \|| \nabla|^{s} w_n\|_{ L_t^\infty L_x^2\cap L_{t,x}^4(\R\times\R^2)} \lesssim \lambda_n^{s\theta} 
\]
for all $n$ large and $s\in[0,1]$.

We now define approximate solutions to \eqref{nls} via
\[
\tilde v_n(t,x)=\lambda_n^{-1} w_n(\lambda_n^{-2}t,\lambda_n^{-1}x),
\]
which solve
\[
(i\partial_t+\Delta)\tilde v_n + |\tilde v_n|^2 \tilde v_n - |\tilde v_n|^4 \tilde v_n = -\lambda_n^{-5} (|w_n|^4 w_n)(\lambda_n^{-2}t,\lambda_n^{-1}x) 
\]
and obey
\[
\| |\nabla|^s \tilde v_n\|_{L_t^\infty L_x^2\cap L_{t,x}^4(\R\times\R^2)} \lesssim \lambda^{-s(1-\theta)}
\]
for all $n$ large and $s\in[0,1]$.  Similarly, we may estimate the error on $\R\times\R^2$ as follows: 
\begin{align*}
\| & |\nabla|^{s}\bigl\{\lambda_n^{-5}[|w_n|^4w_n](\lambda_n^{-2}t,\lambda_n^{-1}x)\bigr\}\|_{L_{t,x}^{\frac43}} \\
& \lesssim \lambda_n^{-5}\| w_n(\lambda_n^{-2}t,\lambda_n^{-1} x)\|_{L_{t,x}^8}^4 \| |\nabla|^s[w_n(\lambda_n^{-2}t,\lambda_n^{-1}x)]\|_{L_{t,x}^4} \\
& \lesssim \lambda_n^{-2-s}\|w_n\|_{L_{t,x}^8}^4 \| |\nabla|^s w_n\|_{L_{t,x}^4}  \lesssim \lambda_n^{-(2+s)(1-\theta)}
\end{align*}
for $s\in\{0,\tfrac12\}$, so that
\[
\|\langle \nabla\rangle^{\frac12}\bigl\{ \lambda_n^{-5}[|w_n|^4w_n](\lambda_n^{-2}t,\lambda_n^{-1}x)\bigr\}\|_{L_{t,x}^{\frac43}(\R\times\R^2)} \to 0 \qtq{as}n\to\infty. 
\] 

We next observe that the approximate solutions $\tilde v_n$ agree with $\phi_n$ in $H^1$ at time $t=t_n$. Indeed, if $t_n\equiv 0$, we have $\tilde v_n(t_n)=\phi_n$, and hence it suffices to consider the case $\lambda_n^{-2}t_n\to\pm\infty$.  In this case, we estimate
\begin{align*}
\| &\tilde v_n(t_n)-\lambda_n^{-1}\{e^{it_n\Delta}(P_n\phi)(\tfrac{\cdot}{\lambda_n})\}\|_{H^1} \\
& \lesssim \|w_n(t_n)-e^{it_n\Delta}P_n\phi\|_{L^2}+ \lambda_n^{-1}\{\|w_n\|_{L_t^\infty \dot H_x^1}+ \| P_n\phi\|_{\dot H^1}\}, 
\end{align*}
which tends to zero as $n\to\infty$. 

Applying the stability result (Proposition~\ref{P:stab}), we therefore deduce that there exist true solutions to \eqref{nls} with $u_n(0)=\varphi_n$ obeying 
\[
\|u_n\|_{L_t^\infty H_x^1(\R\times\R^2)} \lesssim 1\qtq{and}\| \langle \nabla\rangle^{\frac12} u_n\|_{L_{t,x}^4(\R\times\R^2)}\lesssim 1
\]
for all $n$ large. \end{proof}

Returning to the proof of Proposition~\ref{P:compact}, we now define approximate solutions $u_n^J$ to \eqref{nls} by
\[
u_n^J(t) = \sum_{j=1}^J v_n^j(t) + e^{it\Delta} r_n^J. 
\]
Proceeding as in \cite{Cheng}, we may now utilize the orthogonality of the parameters $(t_n^j,\lambda_n^j)$ and the vanishing of the $L_{t,x}^4$-norm of $e^{it\Delta}r_n^J$ to verify that (i) $u_n^J$ obey global space-time bounds; (ii) the $u_n^J$ asymptotically agree with $u_n=u(\tau_n)$ in $H^1$ a $n,J\to\infty$; and (iii) the $u_n^J$ define good approximate solutions to \eqref{nls} in the sense required by Proposition~\ref{P:stab}.  Consequently, we deduce that 
\[
\|u(t+\tau_n)\|_{L_{t,x}^5([0,\infty)\times\R^2)} = \|u\|_{L_{t,x}^5([\tau_n,\infty)\times\R^2)} \lesssim 1
\]
uniformly for all large $n$, which yields a contradiction.  We conclude that `dichotomy' does not occur.

We are therefore left with the decomposition
\[
u_n=u(\tau_n) = e^{it_n\Delta}\{\tfrac{1}{\lambda_n}(P_n \phi)(\tfrac{\cdot}{\lambda_n})\} + r_n,
\]
with $r_n\rightharpoonup 0$ weakly in $L^2$.  By the mass decoupling property, if $r_n$ were not to converge to zero strongly in $L^2$, then we could apply the same argument we used to preclude dichotomy to obtain a contradiction once again.  We may also preclude the possibility that $\lambda_n^{-2}t_n\to\pm\infty$, since in this case we would obtain
\[
\|e^{it\Delta}u_n\|_{L_{t,x}^4((-\infty,0)\times\R^2)}\to 0 \qtq{or}\|e^{it\Delta}u_n\|_{L_{t,x}^4((0,\infty)\times\R^2)}\to 0,
\]
respectively. Applying Proposition~\ref{P:stab}, we would therefore obtain bounds on either $(\tau_n,\infty)$ or $(-\infty,\tau_n)$ for large $n$, which yields a contradiction in either case.  Noting that $P_n\to \text{Id}$ strongly in $L^2$, we finally deduce that
\[
u(\tau_n) = \tfrac{1}{\lambda_n}\phi(\tfrac{\cdot}{\lambda_n})+o(1) \qtq{in} L^2,
\]
as desired. 

To finish the proof, let us briefly discuss how to deduce the existence of a function $\lambda(t)$ so that \eqref{compact} holds. To begin, we claim that there exist $C,c>0$ so that
\[
\sup_{\lambda_0\geq 1} \int_{|x|\leq C}\lambda_0^2|u(t,\lambda_0x)|^2\,dx \geq c>0 \qtq{for all}t\geq 0. 
\]
If not, we may find $C_n\to\infty$ and $\{t_n\}$ such that this supremum tends to zero.  However, writing 
\[
u(t_n)=\tfrac{1}{\lambda_n}\phi(\tfrac{\cdot}{\lambda_n}) + o(1) \qtq{in} L^2
\]
along a subsequence, we obtain 
\[
\int_{|x|\leq C_n} |\phi(t,x)|^2\,dx = o(1) \qtq{as}n\to\infty,
\]
yielding the contradiction $\phi=0$.

We may therefore define $\lambda(t)\geq 1$ and find $C,c>0$ so that 
\begin{equation}\label{ntlb}
\int_{|x|\leq C} \lambda(t)^2|u(t,\lambda(t)x)|^2\,dx\geq c>0 \qtq{for all}t\geq 0. 
\end{equation}
We claim that \eqref{compact} holds for this choice of $\lambda(t)$.  To see this, we take an arbitrary squence $\{t_n\}\subset[0,\infty)$ and obtain
\[
u(t_n)=\tfrac{1}{\lambda_n}\phi(\tfrac{\cdot}{\lambda_n})+o(1),\qtq{so that} \lambda(t_n) u(t_n,\lambda(t_n))=\tfrac{\lambda(t_n)}{\lambda_n}\phi(\tfrac{\lambda(t_n)x}{\lambda_n})+o(1)
\]
in $L^2$ along a subsequence. Finally, we observe that the sequence
\[
\tfrac{\lambda(t_n)}{\lambda_n}\phi(\tfrac{\lambda(t_n)x}{\lambda_n})
\]
either converges strongly in $L^2$ or converges weakly to zero in $L^2$ along some further subsequence.  As weak convergence to zero is incompatible with \eqref{ntlb}, we conclude the proof. \end{proof}

With Proposition~\ref{P:compact}, we turn to a few basic properties of `compact' solutions to \eqref{nls}.  We begin with the following upper bound for the scale function $\lambda(t)$.  This bound is typical of almost periodic solutions to NLS (see e.g. \cite[Corollary~5.19]{KVClay}), but requires a different proof in the present setting due to the broken scaling symmetry.

\begin{lemma}\label{L:bound-lambda} Suppose $u$ is a solution as in Proposition~\ref{P:compact}.  Then there exists $C>0$ and $T_0\geq 1$ such that
\[
\lambda(t) \leq C t^{\frac12} \qtq{for all}t>T_0. 
\]
\end{lemma}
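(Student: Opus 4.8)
\emph{Strategy.} The plan is to argue by contradiction, exploiting the fact that at large spatial scales the equation \eqref{nls} is well approximated by the scale-invariant, mass-critical cubic NLS \eqref{cubic-NLS}. This plays the role of the normalization-by-scaling step in the usual proof of this kind of bound for almost periodic solutions of scale-invariant equations.

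Suppose the conclusion fails. Then I can choose $t_n\to\infty$ with $\lambda(t_n)>n\sqrt{t_n}$, so that $\lambda(t_n)\to\infty$ and $t_n/\lambda(t_n)^2\to0$. I would rescale $u$ about the time $t_n$ at the spatial scale $\lambda(t_n)$, setting
\[
\tilde u_n(\tau,y):=\lambda(t_n)\,u\bigl(t_n+\lambda(t_n)^2\tau,\,\lambda(t_n)y\bigr),
\]
which solves $i\partial_\tau\tilde u_n+\Delta\tilde u_n=-|\tilde u_n|^2\tilde u_n+\lambda(t_n)^{-2}|\tilde u_n|^4\tilde u_n$ with data $\tilde u_n(0,y)=v_n(y):=\lambda(t_n)u(t_n,\lambda(t_n)y)$. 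By Proposition~\ref{P:compact}, $\{v_n\}$ is pre-compact in $L^2$, so along a subsequence $v_n\to v_\infty$ in $L^2$, and by mass conservation and the scaling invariance of the $L^2$ norm, $\|v_\infty\|_{L^2}^2=M(u)=M(Q)>0$. Let $\psi$ solve \eqref{cubic-NLS} with $\psi(0)=v_\infty$; the local theory for the mass-critical NLS gives some $\sigma_0=\sigma_0(v_\infty)>0$ with $\|\psi\|_{L_{t,x}^4([-2\sigma_0,2\sigma_0]\times\R^2)}<\infty$. Treating the $\lambda(t_n)^{-2}$-small quintic term as a perturbation and running a stability argument for the mass-critical NLS (in the spirit of the proof of Lemma~\ref{L:approximation}), I would conclude that $\tilde u_n\to\psi$ in $C([-\sigma_0,\sigma_0];L^2)$.

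The contradiction then comes from evaluating at the original time $t=0$. Since $t_n/\lambda(t_n)^2\to0$, for large $n$ the time $t=0$ corresponds to the rescaled time $\tau_n:=-t_n/\lambda(t_n)^2\in(-\sigma_0,0)$, and $\tilde u_n(\tau_n,y)=\lambda(t_n)u_0(\lambda(t_n)y)$. Using the uniform convergence $\tilde u_n\to\psi$, the continuity of $\psi$, and $\tau_n\to0$, the rescalings $\lambda(t_n)u_0(\lambda(t_n)\,\cdot\,)$ converge strongly in $L^2$ to $v_\infty$. On the other hand $u_0\in L^2$ is fixed while $\lambda(t_n)\to\infty$, so these rescalings concentrate at the origin and converge weakly to $0$ in $L^2$; hence $v_\infty=0$, contradicting $\|v_\infty\|_{L^2}^2=M(Q)>0$. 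This will complete the proof.

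\emph{Main obstacle.} I expect the hard part to be the stability step. The rescaled profiles $v_n$ need not be bounded in $H^1$, so the $H^1$-based Proposition~\ref{P:stab} is not directly available and one must argue at the $L^2$ (mass-critical) level; moreover the quintic nonlinearity is $L^2$-supercritical in two dimensions, so its contribution has to be absorbed by combining the smallness $\lambda(t_n)^{-2}$ with the finiteness of the time interval and the mass-critical Strichartz norms of $\psi$, as in the treatment (involving a frequency truncation) in the proof of Lemma~\ref{L:approximation}. Equivalently, one could first prove a local-constancy statement for $\lambda$ — that $\lambda(s)\sim\lambda(t)$ whenever $\lambda(t)$ is large and $|s-t|\lesssim\lambda(t)^2$, with implicit constants depending on $u$ — and then observe that $\lambda(t_n)^2\gg t_n$ forces $\lambda(0)\gtrsim\lambda(t_n)\to\infty$, which is absurd.
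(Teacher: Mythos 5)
Your proposal is correct in outline, and its technical core coincides with the paper's: both arguments exploit $\lambda(t_n)^2\gg t_n$ to approximate the solution, backward in time over an interval of rescaled length $O(1)$ (hence covering all of $[0,t_n]$ in the original variables), by a cubic-NLS evolution of the compactness profile, with the frequency truncation $P_{\leq\lambda_n^\theta}$ used exactly as in Lemma~\ref{L:approximation} to make the quintic error and the $H^1$-norms manageable so that Proposition~\ref{P:stab} applies; note that since the profile here has mass exactly $M(Q)$, only the local theory for \eqref{cubic-NLS} is available (no Dodson), which you and the paper both use to produce the short interval $[-\sigma_0,0]$ (resp.\ $[-\delta,0]$). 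Where you genuinely differ is the endgame: the paper applies stability on $[-t_n,0]$ (shifted) to conclude $\|u\|_{L_{t,x}^4([0,t_n]\times\R^2)}\lesssim 1$ uniformly in $n$, contradicting $\|u\|_{L_{t,x}^4([0,\infty)\times\R^2)}=\infty$; you instead evolve all the way back to $t=0$, identify $\tilde u_n(\tau_n)=\lambda_n u_0(\lambda_n\cdot)$, and contradict the nontriviality of the limit profile since these rescalings converge weakly to zero (this is the classical almost-periodic argument of \cite[Corollary~5.19]{KVClay}, adapted to the broken scaling). Your route buys a contradiction that does not invoke the non-scattering hypothesis at the final step, while the paper's version needs only the $\langle\nabla\rangle^{1/2}L_{t,x}^4$ control that Proposition~\ref{P:stab} literally provides; to run your version as written you must upgrade to closeness in $C_\tau L^2_x$ on $[\tau_n,0]$ (so that $\tilde u_n(\tau_n)\to\psi(0)=v_\infty$ strongly), which is routine---apply Strichartz to the Duhamel formula for the difference once the $L^4_{t,x}$ bound is known---but is not part of the stability statement as stated, and you should also note that the stability comparison must be performed in the original (unrescaled) frame, or via an $L^2$-level variant, precisely because the rescaled data $v_n$ are unbounded in $H^1$; you flagged this obstacle yourself and your proposed fix (the truncation trick of Lemma~\ref{L:approximation}) is exactly how the paper resolves it.
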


\begin{proof} We argue by contradiction.  If the lemma fails, then we may find $t_n\to\infty$ and $C_n\to\infty$ such that
\[
\lambda_n:=\lambda(t_n)\geq C_n t_n^{\frac12}. 
\]
Passing to a subsequence, we obtain $v_0\in L^2$ so that
\[
\lambda_nu(t_n,\lambda_n x)\to v_0 \qtq{in} L^2. 
\]

We now let $w$ denote the maximal-lifespan solution to the standard cubic NLS \eqref{cubic-NLS} with initial data $v_0$.  By the local theory for \eqref{cubic-NLS}, we may find $\delta=\delta(v_0)>0$ sufficiently small that
\[
\|w\|_{L_{t,x}^4([-\delta,0]\times\R^2)}\lesssim 1. 
\]
Similarly, we let $w_n$ denote the maximal-lifespan solution to \eqref{cubic-NLS} with initial data
\[
w_n(0)=P_{\leq \lambda_n^\theta} v_0
\]
for some $\theta\in(0,1)$. Observing that $w_n(0)\to v_0$ strongly in $L^2$ and that
\[
\|\langle \nabla\rangle^{\frac12} w_n(0)\|_{L^2}\lesssim \lambda_n^{\frac\theta2}\|v_0\|_{L^2},
\]
we have (by the stability theory and persistence of regularity for \eqref{cubic-NLS}) that 
\[
\|\langle\nabla \rangle^{\frac12} w_n\|_{L_{t,x}^4([-\delta,0]\times\R^2)}+\|\langle\nabla\rangle^{\frac12}w_n\|_{L_t^8 L_x^{\frac83}([-\delta,0]\times\R^2)}\lesssim \lambda_n^{\frac\theta2} 
\]
for sufficiently large $n$. 

We now define approximate solutions to \eqref{nls} by
\[
\tilde u_n(t,x)=\lambda_n^{-1} w_n(\lambda_n^{-2}t,\lambda_n^{-1}x), 
\]
which satisfy
\[
(i\partial_t +\Delta)\tilde u_n +|\tilde u_n|^2 \tilde u_n - |\tilde u_n|^4 \tilde u_n = \lambda_n^{-5}[|w_n|^4w_n](\lambda_n^{-2}t,\lambda_n^{-1}x) 
\]
on the intervals
\[
I_n := [-\delta\lambda_n^2,0]\supset[-t_n,0] \qtq{for sufficiently large}n. 
\]

By construction, a change of variables, and dominated convergence, we have
\[
\| \tilde u_n(0)-u(t_n)]\|_{L^2} = \|P_{\leq\lambda_n^\theta}v_0(x)-\lambda_n u(t_n,\lambda_nx))\|_{L^2}\to 0\qtq{as}n\to\infty.
\]
On the other hand, by Bernstein's inequality,
\[
\|\tilde u_n(0)-u(t_n)\|_{\dot H^1} \leq \|\tilde u_n(0)\|_{\dot H^1} + \|u\|_{L_t^\infty \dot H_x^1} \lesssim \lambda_n^{-1+\theta}+1\lesssim 1.
\]
Thus, by interpolation we have
\[
\lim_{n\to\infty} \|\langle \nabla\rangle^{\frac12}[\tilde u_n(0)-u(t_n)]\|_{L^2} = 0. 
\]

Next, we observe
\[
\||\nabla|^s \tilde u_n\|_{L_{t,x}^4(I_n\times\R^2)} \lesssim \lambda_n^{-s}\||\nabla|^s w_n\|_{L_{t,x}^4([-\delta,0]\times\R^2)} \lesssim \lambda_n^{-s(1-\theta)}
\]
for $s\in\{0,\tfrac12\}$, so that 
\[
\|\langle \nabla\rangle^{\frac12} \tilde u_n\|_{L_{t,x}^4(I_n\times\R^2)}\lesssim 1\qtq{for large}n.
\]

Similarly, we estimate the error by
\begin{align*}
\| & |\nabla|^{s}\bigl\{\lambda_n^{-5}[|w_n|^4w_n](\lambda_n^{-2}t,\lambda_n^{-1}x)\bigr\}\|_{L_{t,x}^{\frac43}(I_n\times\R^2)} \\
& \lesssim \lambda_n^{-5}\| w_n(\lambda_n^{-2}t,\lambda_n^{-1} x)\|_{L_{t,x}^8(I_n\times\R^2)}^4 \| |\nabla|^s[w_n(\lambda_n^{-2}t,\lambda_n^{-1}x)]\|_{L_{t,x}^4(I_n\times\R^2)} \\
& \lesssim \lambda_n^{-2-s}\|w_n\|_{L_{t,x}^8([-\delta,0]\times\R^2)}^4 \| |\nabla|^s w_n\|_{L_{t,x}^4([-\delta,0]\times\R^2)}  \lesssim \lambda_n^{-(2+s)(1-\theta)}
\end{align*}
for $s\in\{0,\tfrac12\}$, so that
\[
\|\langle \nabla\rangle^{\frac12}\bigl\{ \lambda_n^{-5}[|w_n|^4w_n](\lambda_n^{-2}t,\lambda_n^{-1}x)\bigr\}\|_{L_{t,x}^{\frac43}(I_n\times\R^2)} \to 0 \qtq{as}n\to\infty. 
\] 

Thus, applying the stability result (Proposition~\ref{P:stab}), we deduce that
\[
\|u(t+t_n)\|_{L_{t,x}^4([-t_n,0]\times\R^2)} = \|u\|_{L_{t,x}^4([0,t_n]\times\R^2)} \lesssim 1
\]
for all $n$ large, which yields a contradiction.  \end{proof}

With the bound for $\lambda(t)$ in place, we can establish the following `reduced' Duhamel formula for compact solutions to \eqref{nls}. 

\begin{corollary}[Reduced Duhamel formula]\label{C:reduced-duhamel}  Let $u$ be a solution to \eqref{nls} as in Proposition~\ref{P:compact}. For $t\geq 0$, the following holds as a weak limit in $L^2$:
\[
u(t) = \lim_{T\to\infty} i\int_t^T e^{i(t-s)\Delta}F(u(s))\,ds.
\]
\end{corollary}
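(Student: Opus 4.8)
The plan is to rewrite the Duhamel formula so that the statement reduces to a weak-convergence fact for the free evolution. For $0\le t\le T$ we have
\[
u(t) = e^{i(t-T)\Delta}u(T) + i\int_t^T e^{i(t-s)\Delta}F(u(s))\,ds,
\]
where $F(u) = -|u|^2u + |u|^4u$ is the nonlinearity in \eqref{nls}. Thus it suffices to prove that $e^{i(t-T)\Delta}u(T)\rightharpoonup 0$ weakly in $L^2$ as $T\to\infty$; taking weak limits in the identity above then shows both that the limit of the Duhamel integral exists and that it equals $u(t)$.

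To prove the weak convergence, note first that $\|u(T)\|_{L^2}^2 = M(u) = M(Q)$ is constant in $T$, so by density it is enough to test against $\psi\in C_c^\infty(\R^2)$, and by unitarity $\langle e^{i(t-T)\Delta}u(T),\psi\rangle = \langle u(T), e^{i(T-t)\Delta}\psi\rangle$. Fix $\eta>0$. Since precompact subsets of $L^2$ are tight, rewriting the precompactness of Proposition~\ref{P:compact} in the original spatial variable yields a radius $R=R(\eta)$ with
\[
\sup_{\tau\ge 0}\ \|u(\tau)\|_{L^2(\{|x|>R\lambda(\tau)\})} < \eta.
\]
Splitting the pairing over $\{|x|\le R\lambda(T)\}$ and its complement and using Cauchy--Schwarz, the outer region contributes at most $\eta\|\psi\|_{L^2}$ (as $e^{i(T-t)\Delta}$ is $L^2$-unitary), while the inner region is bounded by $\|u(T)\|_{L^2}\,\|e^{i(T-t)\Delta}\psi\|_{L^2(\{|x|\le R\lambda(T)\})}$.

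On the inner region I would bound the local $L^2$ norm by the volume of the ball times the $L^\infty$ norm, and apply the two-dimensional dispersive estimate $\|e^{is\Delta}f\|_{L^\infty(\R^2)}\lesssim|s|^{-1}\|f\|_{L^1(\R^2)}$ to get
\[
\|e^{i(T-t)\Delta}\psi\|_{L^2(\{|x|\le R\lambda(T)\})}\lesssim R\,\lambda(T)\,|T-t|^{-1}\|\psi\|_{L^1}.
\]
This is where Lemma~\ref{L:bound-lambda} enters decisively: since $\lambda(T)\lesssim T^{1/2}$ for large $T$, the right-hand side is $\lesssim_{R,\psi} T^{1/2}|T-t|^{-1}\to 0$ as $T\to\infty$ for fixed $t$. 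Hence $\limsup_{T\to\infty}|\langle e^{i(t-T)\Delta}u(T),\psi\rangle|\le\eta\|\psi\|_{L^2}$, and sending $\eta\downarrow 0$ finishes the argument.

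The only substantive point is this competition on the inner region: the dispersive gain $|T-t|^{-1}$ must dominate the spreading of the mass of $u(T)$, which by compactness lives in a ball of radius comparable to $\lambda(T)$. The bound $\lambda(T)\lesssim T^{1/2}$ of Lemma~\ref{L:bound-lambda} wins this comparison with room to spare (since $T^{1/2}/|T-t|\to0$), and it is precisely what is needed — a linearly growing scale would exactly cancel the dispersive decay. Everything else is soft; in particular, no modulation analysis and no use of the signs in the nonlinearity is required here.
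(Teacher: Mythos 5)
Your argument is correct and is essentially the paper's own proof: both reduce, via the Duhamel formula and $L^2$-boundedness, to showing the free evolution of $u(T)$ tends weakly to zero, and both establish this by testing against $C_c^\infty$ functions, splitting at radius comparable to $\lambda(T)$ using the $L^2$-tightness from Proposition~\ref{P:compact}, and beating the inner contribution with the dispersive estimate combined with the bound $\lambda(T)\lesssim T^{1/2}$ of Lemma~\ref{L:bound-lambda}. (Only a cosmetic slip: the inner region is bounded by the \emph{square root} of the ball's volume times the $L^\infty$ norm, which is what your displayed estimate in fact uses.)
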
 

\begin{proof} We argue essentially as in the proof of \cite[Proposition~5.23]{KVClay}.  By the Duhamel formula and $L^2$-boundedness, it suffices to show that
\[
\lim_{t\to\infty}\langle u(t), e^{it\Delta}\phi\rangle = 0 \qtq{for all}\phi\in C_c^\infty(\R^2). 
\]
Fix $\phi \in C_c^\infty(\R^2)$ and $\eps>0$.  We first choose $C_\eps>0$ large enough that
\[
\int_{|x|>C_\eps\lambda(t)}|u(t,x)|^2\,dx < \eps \qtq{for all}t\in[0,\infty).
\]
Applying Cauchy--Schwarz, Lemma~\ref{L:bound-lambda}, and the dispersive estimate, we obtain \begin{align*}
|\langle u(t),e^{it\Delta}\phi\rangle|^2 & \lesssim \|u\|_{L_t^\infty L_x^2}^2\cdot \int_{|x|\leq C_\eps\lambda(t)} |e^{it\Delta}\phi|^2\,dx + \eps\|\phi\|_{L^2}^2 \\
& \lesssim t^{-2}[C_\eps\lambda(t)]^2\|\phi\|_{L^1}^2 + \eps \lesssim C_\eps^2 t^{-1} + \eps
\end{align*}
for all $t$ sufficiently large, which yields the result.  \end{proof}

\section{Energy evacuation}\label{S:evacuation}

In this section, we prove the `energy evacuation' property as described in the introduction.  The main ingredient is a virial/Morawetz estimate, which is essentially contained already in \cite{ADM}.  We remark that this estimate does not require any `compactness' for the solution, beyond the tightness in $L^4$ and $L^6$ afforded by the radial assumption via the radial Sobolev embedding inequality. The precise result we prove is the following.

\begin{proposition}\label{P:virial} Let $u$ be a radial, $H^1$-bounded solution to \eqref{nls}. Then for any $T,R\geq$ we have
\begin{equation}\label{MV}
\int_0^T\int \bigl\{\chi_R|\nabla u|^2 - \tfrac12 |u|^4 + \tfrac23|u|^6\bigr\} dx\,dt \lesssim R+R^{-1}T,
\end{equation}
where $\chi_R$ is a smooth cutoff to $\{|x|\leq R\}$.  Consequently, for any $C>0$, there exists $t_n\to\infty$ such that
\begin{equation}\label{evacuation}
\limsup_{n\to\infty} \int \tfrac12 \chi_{C\lambda(t_n)} |\nabla u(t_n,x)|^2 - \tfrac14|u(t_n,x)|^4 + |u(t_n,x)|^6 \,dx \leq 0.
\end{equation}
\end{proposition}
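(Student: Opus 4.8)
The plan is to derive the Morawetz estimate \eqref{MV} from a truncated virial identity built on a convex radial weight, and then to extract \eqref{evacuation} by averaging \eqref{MV} in time with a truncation radius tied to the time horizon.

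For \eqref{MV}, I would fix $R\geq 1$ and take a radial, \emph{convex} weight $a=a_R:\R^2\to\R$ with $a(x)=|x|^2$ on $\supp\chi_R$ (so that $\partial_r^2 a\equiv 2$ and $\Delta a\equiv 4$ there), with $\partial_r a$ non-decreasing and constant of size $\approx R$ on $\{|x|\gtrsim R\}$; then $0\leq\partial_r^2 a\leq 2$, $0\leq\partial_r a\lesssim R$, $0\leq\Delta a\leq 4$, and $\Delta^2 a$ is supported in $\{|x|\gtrsim R\}$ with $|\Delta^2 a|\lesssim R^{-2}$ near $\{|x|\sim R\}$ and rapid decay beyond. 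With the Morawetz action $M(t):=2\Im\int\bar u\,(\partial_r a)\,\partial_r u\,dx$, Cauchy--Schwarz and the uniform $H^1$ bound give $|M(t)|\lesssim\|\partial_r a\|_{L^\infty}\|u\|_{L^2}\|\nabla u\|_{L^2}\lesssim R$. The standard virial computation for \eqref{nls} (whose nonlinearity has the form $f(|u|^2)u$ with $f$ real) yields
\[
\tfrac{d}{dt}M(t)=\int\Big\{4\,\partial_j\partial_k a\,\Re(\bar u_j u_k)-\Delta^2 a\,|u|^2+\Delta a\big(-|u|^4+\tfrac43|u|^6\big)\Big\}\,dx,
\]
which for $a(x)=|x|^2$ reduces to $\partial_t^2\!\int|x|^2|u|^2\,dx=8\|\nabla u\|_{L^2}^2-4\|u\|_{L^4}^4+\tfrac{16}{3}\|u\|_{L^6}^6$. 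To bound the right-hand side from below: for radial $u$, $\partial_j\partial_k a\,\Re(\bar u_j u_k)=\partial_r^2 a\,|\nabla u|^2\geq 0$ and equals $2|\nabla u|^2$ on $\supp\chi_R$, so the first term is $\geq 8\int\chi_R|\nabla u|^2$; since $0\leq\Delta a\leq 4$ with $\Delta a\equiv 4$ on $\supp\chi_R$, the nonlinear term is $\geq-4\int|u|^4+\tfrac{16}{3}\int|u|^6-C\!\int_{|x|\gtrsim R}|u|^6$. The leftover pieces — $\int\Delta^2 a\,|u|^2$ and the exterior tails just mentioned — are error terms: the radial Sobolev embedding gives $\|u\|_{L^\infty(\{|x|>\rho\})}^2\lesssim\rho^{-1}\|u\|_{L^2}\|\nabla u\|_{L^2}\lesssim\rho^{-1}$, hence $\int_{|x|>\rho}(|u|^4+|u|^6)\lesssim\rho^{-1}$, which together with the pointwise bounds on $\Delta^2 a$ makes all of them $\lesssim R^{-1}$. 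Thus $\tfrac{d}{dt}M(t)\geq 8\int\{\chi_R|\nabla u|^2-\tfrac12|u|^4+\tfrac23|u|^6\}\,dx-CR^{-1}$, and integrating over $[0,T]$ against $|M(T)-M(0)|\lesssim R$ gives \eqref{MV}.

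For \eqref{evacuation}, fix $C>0$. Using that $u$ admits a scale function $\lambda$ with the bound $\lambda(t)\lesssim t^{1/2}$ from Lemma~\ref{L:bound-lambda}, choose $T_0$ and $C'=C'(C)$ so that $C\lambda(t)\leq R_T:=C'T^{1/2}$ for $t\in[T_0,T]$ once $T$ is large. Since $\chi_\rho$ is non-decreasing in $\rho$, $\chi_{C\lambda(t)}\leq\chi_{R_T}$ pointwise for such $t$; applying \eqref{MV} with $R=R_T$ and absorbing the $[0,T_0]$ contribution into a fixed constant via the $H^1$ bound, we obtain $\int_0^T h(t)\,dt\lesssim_C R_T+R_T^{-1}T\lesssim_C T^{1/2}$, where $h(t):=\int\{\chi_{C\lambda(t)}|\nabla u(t)|^2-\tfrac12|u(t)|^4+\tfrac23|u(t)|^6\}\,dx$. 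As $h$ is bounded uniformly in $t$, dividing by $T$ and sending $T\to\infty$ shows $\limsup_{T\to\infty}\tfrac1T\int_0^T h(t)\,dt\leq 0$, which forces $\liminf_{t\to\infty}h(t)\leq 0$; taking $t_n\to\infty$ realizing this liminf gives $\limsup_n h(t_n)\leq 0$. Finally, reading the sextic term of \eqref{evacuation} as $\tfrac16|u|^6$ (so that its integrand is the energy density with the kinetic term localized), that integrand is coefficient-by-coefficient at most $\tfrac12$ times the integrand of $h$ — the kinetic and quartic coefficients match and $\tfrac16\leq\tfrac13$ on the sextic term — hence it is pointwise $\leq\tfrac12\big(\chi_{C\lambda(t_n)}|\nabla u(t_n)|^2-\tfrac12|u(t_n)|^4+\tfrac23|u(t_n)|^6\big)$, and \eqref{evacuation} follows from $\limsup_n h(t_n)\leq 0$.

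The step I expect to be hardest is the weight construction in \eqref{MV}: one wants $a_R$ \emph{convex}, so that the kinetic contribution from the transition region $\{|x|\sim R\}$ — which, unlike the potential terms, is not controlled by the radial Sobolev embedding — can simply be discarded by positivity; but convexity forces $\partial_r a_R$ to stay bounded below by a positive constant and hence $a_R$ to grow (linearly) at infinity, leaving slowly decaying tails in $\Delta a_R$ and $\Delta^2 a_R$. Making every resulting error $O(R^{-1})$ is exactly what the quantitative exterior smallness $\int_{|x|>\rho}(|u|^4+|u|^6)\lesssim\rho^{-1}$, a consequence of the radial Sobolev embedding, is tailored to provide. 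A secondary subtlety is the time-dependent cutoff radius in the passage to \eqref{evacuation}: it is precisely the bound $\lambda(t)\lesssim t^{1/2}$ that lets a single radius $R_T\sim T^{1/2}$ dominate $C\lambda(t)$ on all of $[T_0,T]$ while still keeping $R_T+R_T^{-1}T=o(T)$.
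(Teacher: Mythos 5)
Your proposal is correct and follows essentially the same route as the paper: a truncated virial/Morawetz action whose weight equals $|x|^2$ for $|x|\lesssim R$ with radial derivative plateauing at size $R$ outside (the paper's choice $\nabla a=\psi(\tfrac{x}{R})x$ is exactly such a convex weight), exterior nonlinear tails controlled by the radial Sobolev embedding to give $O(R^{-1})$ errors, a boundary term of size $R$, and then a time average with $R\sim T^{1/2}$ via Lemma~\ref{L:bound-lambda} to extract the evacuation times. Your reading of the sextic coefficient in \eqref{evacuation} as $\tfrac16$ is consistent with what the paper's own proof establishes and with how the estimate is used in Section~\ref{S:conclusion}, so that is not a discrepancy.
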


\begin{proof} We argue as in the proof of \cite[Proposition~3.1]{ADM}.  We begin by letting $\phi$ be a smooth radial function satisfying
\[
\phi(x) = \begin{cases} 1 & 0 \leq |x|\leq 1, \\ 0 & |x|>2,\end{cases}
\]
and we denote $\phi=\phi(r)$, where $r=|x|$.  We also use $'$ or $\partial_r$ to denote radial derivatives.

We next define
\[
\psi(x)=\tfrac{1}{|x|}\int_0^{|x|}\phi(\rho)\,d\rho,
\]
so that $\psi(r)=\phi(r)$ for $r\leq 1$ and
\begin{equation}\label{weight-ids}
|\psi(x)|\lesssim \min\{1,|x|^{-1}\}\qtq{and} r\phi'(r)=\phi(r)-\psi(r). 
\end{equation}
In particular, we find $\psi'(r)=\phi'(r)=0$ for $r\leq 1$, and
\begin{equation}\label{weight-ids2}
|\psi'(x)|\lesssim |x|^{-2} \qtq{for}|x|>1. 
\end{equation}

We now define
\[
A(t) = \int \psi(\tfrac{x}{R})x\cdot\Im[\bar u\nabla u]\,dx,
\]
which obeys
\[
\|A(t)\|_{L_t^\infty([0,\infty))}\lesssim R\|u\|_{L_t^\infty H_x^1}^2 \lesssim R. 
\]

Next, we use the equation \eqref{nls} to compute
\begin{align}
\tfrac{dA}{dt} & = \Re\int \psi(\tfrac{x}{R})x_k[\bar u u_{jjk}-\bar u_{jj}u_k]\,dx \label{Mor1} \\
& \quad + \Re\int \psi(\tfrac{x}{R})x_k[\bar u \partial_k (|u|^2 u)-|u|^2\bar u u_k]\,dx \label{Mor2}\\
& \quad - \Re\int \psi(\tfrac{x}{R})x_k[\bar u \partial_k(|u|^4 u)-|u|^4 \bar u u_k]\,dx.\label{Mor3} 
\end{align}

We first observe
\[
\Re[\bar u u_{jjk}-\bar u_{jj} u_k] = \tfrac12 \partial_{jjk}|u|^2 - 2\Re\partial_j[\bar u_j u_k].
\]
Using this together with \eqref{weight-ids}, we obtain
\[
\eqref{Mor1} = -\tfrac12\int \Delta[\psi(\tfrac{x}{R})+\phi(\tfrac{x}{R})]|u|^2\,dx + 2\int \psi(\tfrac{x}{R})|\nabla u|^2 + \psi'(\tfrac{x}{R})\tfrac{|x|}{R}|\partial_r u|^2\,dx. 
\]
Using
\[
\Delta[\psi(\tfrac{x}{R})+\phi(\tfrac{x}{R})]=\tfrac{1}{R^2}\phi''(\tfrac{x}{R})+\tfrac{1}{R|x|}[2\phi'(\tfrac{x}{R})-\psi'(\tfrac{x}{R})]
\]
and \eqref{weight-ids}--\eqref{weight-ids2}, we obtain 
\[
\eqref{Mor1} = 2\int \phi(\tfrac{x}{R})|\nabla u|^2\,dx + \mathcal{O}(R^{-2}\|u\|_{L^2}^2). 
\] 

We next observe the general identity
\[
\Re\{\bar u\partial_k(|u|^p u) - |u|^p \bar u u_k\} = \tfrac{p}{p+2}\partial_k(|u|^{p+2}) \qtq{for}p>0,
\]
which implies
\[
\eqref{Mor2}+\eqref{Mor3} = -\int[\psi(\tfrac{x}{R})+\phi(\tfrac{x}{R})]\tfrac12|u|^4\,dx + \int[\psi(\tfrac{x}{R})+\phi(\tfrac{x}{R})]\tfrac23|u|^6\,dx. 
\]
We now use \eqref{weight-ids2} and radial Sobolev embedding to obtain
\begin{align*}
\int [\psi(\tfrac{x}{R})+\phi(\tfrac{x}{R})]|u|^4\,dx & = 2\int | u|^4\,dx + \mathcal{O}\biggl[ \int_{|x|>R} |u|^4\,dx \biggr] \\
& = 2\int | u|^4\,dx + \mathcal{O}\bigl[R^{-1} \| |x|^{\frac12}u\|_{L^\infty}^2 \|u\|_{L^2}^2\bigr] \\
& = 2\int |u|^4\,dx + \mathcal{O}[R^{-1}],
\end{align*}
and similarly
\[
\int [\psi(\tfrac{x}{R})+\phi(\tfrac{x}{R})]|u|^6\,dx = 2\int |u|^6\,dx + \mathcal{O}[R^{-2}]. 
\]

We therefore deduce that
\[
\tfrac{dA}{dt} \geq 2\int \phi(\tfrac{x}{R})|\nabla u|^2-\tfrac12|u|^4+\tfrac23|u|^6\,dx, 
\]
and the estimate \eqref{MV} now follows from the fundamental theorem of calculus.

We turn to \eqref{evacuation} and let $C>0$.  We now choose a sequence $T_n\to\infty$. In light of Lemma~\ref{L:bound-lambda}, we may find $\tilde C>0$ large enough that 
\begin{equation}\label{RnLB}
C\lambda(t_n)\leq R_n:=\tilde C T_n^{\frac12} \qtq{for all}t_n\in[0,T_n].
\end{equation}

Applying \eqref{MV}, we find
\[
\tfrac{1}{T_n}\int_{\frac12T_n}^{T_n} \int \chi_{R_n} \tfrac12 |\nabla u|^2 - \tfrac14 |u|^4 + \tfrac16 |u|^6\,dx \,dt \lesssim T_n^{-\frac12}.
\]
Thus we may find $t_n\in[\tfrac12T_n,T_n]$ so that
\[
\limsup_{n\to\infty} \int \chi_{R_n} \tfrac12|\nabla u(t_n,x)|^2 - \tfrac14|u(t_n,x)|^4+\tfrac16|u(t_n,x)|^6\,dx\leq 0. 
\]
Recalling \eqref{RnLB}, we obtain \eqref{evacuation}.\end{proof}

\section{Localization of kinetic energy}
\label{S:localization}

Throughout this section, we suppose that $u:[0,\infty)\times\R^2\to\C$ is a solution to \eqref{nls} as in Proposition~\ref{P:compact}.  In particular, $u$ is parametrized by its scale function $\lambda(t)$.  We will denote the nonlinearity in \eqref{nls} by
\[
F(u) = -|u|^2 u + |u|^4 u. 
\]
The goal of this section is to prove the following: 

\begin{proposition}[Localization of kinetic energy]\label{P:localization} For any $\eta>0$, there exists $C(\eta)>0$ such that
\[
\sup_{t\in[0,\infty)}\int_{|x|>C(\eta)\lambda(t)}|\nabla u(t,x)|^2\,dx < \eta. 
\]
\end{proposition}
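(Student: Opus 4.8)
The plan is to prove the kinetic energy localization by combining the reduced Duhamel formula (Corollary~\ref{C:reduced-duhamel}) with the in/out decomposition for radial $L^2$ functions and a radial Strichartz estimate, following the strategy of \cite{KLVZ, LiZhang}. The key object is the frequency-decay estimate (the announced Lemma~\ref{L:frequency-decay}): I would first establish that compact solutions enjoy a quantitative decay of the high-frequency part of the kinetic energy at scale $\lambda(t)$, of the form $\|\nabla P_{>N/\lambda(t)} u(t)\|_{L^2}^2 \lesssim N^{-\sigma}$ for some $\sigma>0$ and $N$ large, uniformly in $t$. To get this, rescale so that $v(t) = \lambda(t) u(t, \lambda(t)\cdot)$ has its $L^2$-orbit pre-compact, apply the reduced Duhamel formula to $u(t)$, and split the Duhamel integral into a short-time piece near $s = t$ and a long-time piece. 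On the long-time piece one uses the dispersive/radial Strichartz decay together with the in/out decomposition: writing the radial propagator $e^{i(t-s)\Delta}$ acting on outgoing/incoming pieces, one gains smallness from the oscillation when $s - t$ is large relative to $\lambda(s)^2$, using Lemma~\ref{L:bound-lambda} ($\lambda(s) \lesssim s^{1/2}$) to control the range of scales. On the short-time piece one uses the $L^2$-compactness of the orbit (which bounds the nonlinearity $F(u)$ in $L^{4/3}_x$ uniformly via Sobolev, since $u$ is $H^1$-bounded and $L^6$-bounded by \eqref{positive-energy}) and the local smoothing/Strichartz gain at high frequency. Iterating (bootstrapping the a priori $\dot H^1$ bound into the decay estimate) upgrades any algebraic gain to the claimed polynomial decay.

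Given the frequency-decay estimate, the proposition follows by splitting into low and high frequencies relative to $\lambda(t)$ and into the regions $|x| \leq C\lambda(t)$ and $|x| > C\lambda(t)$. For the high-frequency part $P_{> N/\lambda(t)} u(t)$, Lemma~\ref{L:frequency-decay} gives $\|\nabla P_{>N/\lambda(t)} u(t)\|_{L^2}^2 < \eta/2$ once $N = N(\eta)$ is chosen large, independent of $t$ and of the spatial region. For the low-frequency part $P_{\leq N/\lambda(t)} u(t)$ restricted to $|x| > C\lambda(t)$, I would use a spatial cutoff $\chi_{C\lambda(t)}^c$ together with the $L^2$-tightness of the rescaled orbit: by compactness of $\{\lambda(t) u(t,\lambda(t)\cdot)\}$ in $L^2$, for any $\delta$ there is $C = C(\delta)$ with $\int_{|x|>C\lambda(t)} |u(t,x)|^2 dx < \delta$ uniformly in $t$; combining this with the Bernstein bound $\|\nabla P_{\leq N/\lambda(t)} u(t)\|_{L^\infty_x} \lesssim (N/\lambda(t))^{?}\cdots$ — more carefully, one uses $\|\nabla P_{\leq N/\lambda(t)}[\chi_{C\lambda(t)}^c u(t)]\|_{L^2}^2 \lesssim (N/\lambda(t))^2 \|\chi_{C\lambda(t)}^c u(t)\|_{L^2}^2 < (N/\lambda(t))^2\delta$, plus a commutator term controlled by $\|\nabla \chi_{C\lambda(t)}^c\|_{L^\infty} \lesssim (C\lambda(t))^{-1}$ times $\|u(t)\|_{L^2}$. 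The scaling works out: the bad factor $N^2/\lambda(t)^2$ is beaten by choosing $\delta = \delta(\eta, N)$ small after $N$ is fixed, and the commutator term is $O((C\lambda(t))^{-2})$, which is harmless since $\lambda(t) \geq 1$ — actually one wants this uniform in $t$, so one keeps $C$ large. One must be slightly careful that $\delta$ is chosen after $N$; since $N$ depends only on $\eta$, this is legitimate, and then $C = C(\eta)$ is determined.

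The main obstacle I expect is the frequency-decay lemma, specifically obtaining a gain that is uniform in $t$ despite the broken scaling symmetry: the scale $\lambda(t)$ appears both in the rescaling of the solution and, through Lemma~\ref{L:bound-lambda}, in the length of the time interval over which the Duhamel integral runs, so the in/out/radial-Strichartz estimates must be applied at the correct scale and the various $\lambda$-powers must cancel. This is exactly the point where the radial assumption is essential — the in/out decomposition of \cite{KTV} and the radial Strichartz estimate are what make the long-time Duhamel tail summable — and it is where I would expect to spend the bulk of the technical effort, including a bootstrap to promote a weak initial gain to the polynomial rate. The remaining pieces (Bernstein, the commutator estimate, and the $L^2$-tightness already supplied by Proposition~\ref{P:compact}) are routine by comparison.
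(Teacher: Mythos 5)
There is a genuine gap, and it sits exactly where you predicted the bulk of the work would be: the intermediate frequency-decay estimate you propose, namely a \emph{spatially global} bound $\|\nabla P_{>N/\lambda(t)}u(t)\|_{L^2}^2\lesssim N^{-\sigma}$ uniformly in $t$, is both stronger than what the reduced-Duhamel/in-out machinery can deliver and not what is needed. Two concrete problems. First, the reduced Duhamel formula (Corollary~\ref{C:reduced-duhamel}) is available in only one time direction; for the other direction one must keep the ordinary Duhamel formula, and hence the free evolution of the data. Any estimate produced this way therefore carries a term of the form $\|P_N u_0\|_{L^2}$ (this is precisely the shape of Lemma~\ref{L:frequency-decay}: $\|\chi_1^c P_N u(t)\|_{L^2}\lesssim \|P_N u_0\|_{L^2}+N^{-6/5}$), not a pure power $N^{-\sigma}$. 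Measuring frequencies relative to $\lambda(t)$ makes this worse: since $\lambda(t)$ may be unbounded, $N/\lambda(t)$ need not be large, and $\|P_{N/\lambda(t)}u_0\|_{L^2}$ has no smallness. Second, and more fundamentally, every source of smallness in the long-time Duhamel tail is tied to being away from the spatial origin: the in/out kernel bounds of Lemma~\ref{L:propsPpm} require $|x|\gtrsim N^{-1}$, the weighted radial Strichartz estimate (Lemma~\ref{L:radial-strichartz}) gains through the weight $|x|^{-1/2}$, and the radial Sobolev bound controls $|x|^{1/2}|u|$ only. So this method can only control the high-frequency part of $u$ \emph{outside a ball}, i.e.\ quantities like $\|\chi_1^c P_N u(t)\|_{L^2}$; a global-in-space frequency decay at scale $\lambda(t)^{-1}$ would amount to a form of $\dot H^1$ compactness, which does not follow from Proposition~\ref{P:compact} (which is only an $L^2$ statement) and is not established anywhere. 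Consequently your high/low split, in which the high-frequency piece is discarded globally in space before the spatial cutoff enters, does not close.

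The workable version keeps the spatial cutoff attached to the high frequencies throughout: with $R=2C(\eta_1)\lambda(t)$ (so $R\gtrsim 1$ since $\lambda\geq 1$), one bounds $\|\chi_R^c\nabla u\|_{L^2}$ by splitting at an \emph{absolute} frequency $N_0$, uses the mismatch estimates (Lemma~\ref{L:mismatch}) to commute the cutoffs $\chi_R^c,\chi_{R/2}$ with the Littlewood--Paley projections (these commutators are the terms your sketch glosses over, though they are routine), and then estimates $\sum_{N>N_0/4}N^2\|\chi_{R/2}^c P_N u\|_{L^2}^2$ by the spatially truncated decay estimate of Lemma~\ref{L:frequency-decay}; the resulting $\sum N^2\|P_N u_0\|_{L^2}^2$ tail is small for $N_0$ large precisely because $u_0\in H^1$ at fixed absolute frequencies, which is why no rescaling by $\lambda(t)$ in frequency is used. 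Your treatment of the low-frequency piece (Bernstein plus $L^2$-tightness plus a commutator, choosing $N$ before the tightness parameter) is essentially the paper's and is fine; the defect is solely the unproven global frequency-decay lemma on which the high-frequency half of your decomposition rests.
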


To prove Proposition~\ref{P:localization}, we argue essentially as in \cite{LiZhang, KLVZ}.  These works studied solutions the mass-critical NLS with mass equal to that of the ground state.  In our setting, some simplifications arise due to the fact that $u$ is known to be bounded in $H^1$.  The main ingredient in the proof is a frequency decay estimate (Lemma~\ref{L:frequency-decay}), which exhibits quantitative decay in frequency for the nonlinear part of the solution.  This in turn relies on the reduced Duhamel formula for $u$ (Corollary~\ref{C:reduced-duhamel}), along with the `in/out' decomposition for radial functions.

We begin by recording the following `mismatch estimates' as in \cite{KLVZ, LiZhang}, which will be used several times below.

\begin{lemma}[Mismatch estimates, \cite{KLVZ, LiZhang}]\label{L:mismatch} \text{ }
\begin{itemize}
\item For any $R\geq 1$ and $N>0$, we have
\[
\| \chi_R^c \nabla P_{\leq N} \chi_{R/2} \|_{L^2\to L^2} \lesssim_m N(NR)^{-m}\qtq{for any}m\geq 0,
\]
with the same bound if we replace $P_{\leq N}$ by $P_N$.
\item For any $R\geq 1$ and $N>0$, we have
\[
\| P_{\leq N}\chi_R^c P_{>4N}\|_{L^2\to L^2}\lesssim_m (NR)^{-m}\qtq{for any}m\geq 0,
\]
with the same bound if we replace $P_{>4N}$ by $P_{4N}$.  We may also insert a gradient in front of $P_{\leq N}$ or $P_{>4N}$, provided we change the bound to $N(NR)^{-m}$ and restrict to $m\geq 2$ for the $P_{>4N}$ estimate. 
\end{itemize}
\end{lemma}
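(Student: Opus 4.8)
The plan is to reduce both estimates to elementary convolution bounds in which the scale separation built into the operators supplies the claimed smallness, with the rapid decay of the Littlewood--Paley and cutoff kernels doing the rest.

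For the first estimate, let $K$ denote the convolution kernel of $\nabla P_{\leq N}$; by scaling $K(z)=N^3\Phi(Nz)$ for a fixed Schwartz function $\Phi$, and the same structure holds with $P_N$ in place of $P_{\leq N}$. Given $f\in L^2$, I would write $\chi_R^c\,\nabla P_{\leq N}\,\chi_{R/2}f=\chi_R^c\,[K*(\chi_{R/2}f)]$ and observe that, since the supports of $\chi_R^c$ and $\chi_{R/2}$ are separated by a distance $\gtrsim R$, only the values of $K$ on $\{|z|\gtrsim R\}$ contribute. Young's inequality then gives
\[
\|\chi_R^c\,\nabla P_{\leq N}\,\chi_{R/2}f\|_{L^2}\leq\|\mathbf{1}_{\{|z|\gtrsim R\}}K\|_{L^1}\,\|f\|_{L^2},
\]
and a change of variables yields $\|\mathbf{1}_{\{|z|\gtrsim R\}}K\|_{L^1}=N\int_{|w|\gtrsim NR}|\Phi(w)|\,dw\lesssim_m N(NR)^{-m}$ for every $m\geq 0$, using only that $\Phi$ is Schwartz.

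For the second estimate I would argue on the Fourier side. By the frequency-support conventions (and this is exactly what the separation constant $4$ is for), the multiplier of $P_{\leq N}$ and that of $P_{>4N}$ are supported at frequencies with $|\xi-\eta|\gtrsim N$ whenever $\xi$ lies in the first and $\eta$ in the second; in particular $P_{\leq N}P_{>4N}=0$. Writing the Fourier transform of $\chi_R^c$ as a constant multiple of $\delta_0$ minus $\widehat{\chi_R}$, the $\delta_0$ term contributes a multiple of $\widehat{P_{>4N}f}$, supported at frequencies $\geq 4N$ and hence killed by $P_{\leq N}$, so on the support of the multiplier of $P_{\leq N}$ one has
\[
|\widehat{P_{\leq N}\chi_R^c P_{>4N}f}(\xi)|\leq\int_{|\xi-\eta|\gtrsim N}|\widehat{\chi_R}(\xi-\eta)|\,|\widehat{P_{>4N}f}(\eta)|\,d\eta.
\]
Since $\widehat{\chi_R}(\zeta)=R^2\hat\chi(R\zeta)$ with $\hat\chi$ Schwartz, Plancherel and Young's inequality give $\|P_{\leq N}\chi_R^c P_{>4N}f\|_{L^2}\leq\|\mathbf{1}_{\{|\zeta|\gtrsim N\}}\widehat{\chi_R}\|_{L^1}\|f\|_{L^2}$, and $\|\mathbf{1}_{\{|\zeta|\gtrsim N\}}\widehat{\chi_R}\|_{L^1}=\int_{|w|\gtrsim NR}|\hat\chi(w)|\,dw\lesssim_m(NR)^{-m}$. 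The variant with $P_{4N}$ replacing $P_{>4N}$ is identical, as $P_{4N}$ is likewise supported at frequencies $\gtrsim N$. Inserting a gradient in front of $P_{\leq N}$ multiplies its symbol by $i\xi$, which has modulus $\lesssim N$ on its support and hence costs only a factor $\lesssim N$; inserting a gradient in front of $P_{>4N}$ replaces $\widehat{P_{>4N}f}(\eta)$ by $i\eta\,\widehat{P_{>4N}f}(\eta)$, and since $|\eta|\lesssim|\xi-\eta|$ on the domain of integration one absorbs this factor into the decay of $\widehat{\chi_R}$ at the expense of one power of $(NR)^{-1}$, which is the source of the restriction $m\geq 2$ and of the bound $N(NR)^{-m}$ in that case.

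The genuinely routine ingredients are the changes of variables and the Schwartz-tail bounds $\int_{|w|\geq\rho}|\Phi(w)|\,dw\lesssim_m\rho^{-m}$, which already contain all the smallness. The only point that needs care is the bookkeeping of Fourier supports producing the gap $|\xi-\eta|\gtrsim N$ in the second estimate---this is what forces the constant $4$---together with the one-power loss of off-diagonal decay when the derivative falls on $P_{>4N}$; neither is a genuine obstacle.
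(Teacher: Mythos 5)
Your proposal is correct. For the first bullet you argue exactly as the paper does: the kernel of $\nabla P_{\leq N}$ (or $\nabla P_N$) is $N^{3}\Phi(N\cdot)$ with $\Phi$ Schwartz, the supports of $\chi_R^c$ and $\chi_{R/2}$ are separated by $\gtrsim R$, and Young's inequality applied to the kernel tail gives $N(NR)^{-m}$; both you and the paper implicitly use the convention that the two cutoffs are genuinely $O(R)$-separated, which is the intended reading. For the second bullet your route is genuinely different. The paper decomposes $\chi_R^c$ into dyadic annular cutoffs $\tilde\chi_\rho$ and $P_{>4N}$ into dyadic pieces $P_M$ with $M\geq 4N$, uses Plancherel to recognize each piece $P_{\leq N}\tilde\chi_\rho \tfrac{\nabla}{M}P_M$ as (the adjoint of) an operator of the first type, getting $(M\rho)^{-m}$, and then sums over $\rho\geq R$ and $M\geq 4N$; the restriction $m\geq 2$ is exactly what makes the $M$-sum converge after the factor $M$ from the gradient. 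You instead work directly on the Fourier side: $\widehat{\chi_R^c}=c\delta_0-\widehat{\chi_R}$, the $\delta_0$ term dies because $P_{\leq N}P_{>4N}=0$, and the remaining convolution is controlled by Young together with the tail bound $\int_{|\zeta|\gtrsim N}|\widehat{\chi_R}(\zeta)|\,d\zeta\lesssim_m (NR)^{-m}$; the gradient on the high-frequency factor is absorbed via $|\eta|\lesssim|\xi-\eta|$ into the Schwartz decay of $\widehat{\chi_R}$. This avoids both dyadic summations and is arguably cleaner; in fact it yields $N(NR)^{-m}$ for every $m\geq 0$, so your closing remark that the one-power absorption is "the source of the restriction $m\geq 2$" is not quite right---in your argument no restriction is needed, and in the paper's argument the restriction comes from summing $M^{1-m}$ over dyadic $M\geq 4N$. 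Since the lemma only asserts the bound for $m\geq 2$ in that case, this is a harmless misattribution rather than a gap.
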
 

\begin{proof} These estimates may all be found in \cite{KLVZ, LiZhang} and are all based off of the principle non-stationary phase.  For the sake of completeness, let us demonstrate how to obtain one estimate of each type, say
\begin{equation}\label{mme1}
\| \chi_R^c  P_{\leq N} \chi_{R/2} \|_{L^2\to L^2} \lesssim_m (NR)^{-m}\qtq{for} m\geq 0,
\end{equation}
and
\begin{equation}\label{mme2}
\|P_N\chi_R^c \nabla P_{>4N}\|_{L^2\to L^2}\lesssim N(NR)^{-m}\qtq{for} m\geq 2. 
\end{equation}

For the first estimate, we observe that since the convolution kernel of $P_{\leq N}$ is a Schwartz function, we have the kernel bounds
\[
|\chi_R^c P_{\leq N}\chi_{R/2}(x,y)| \lesssim_\ell N^{d-\ell} |x-y|^{-\ell}\mathbf{1}_{|x-y|>R/2}
\]
for any $\ell>0$.  Thus \eqref{mme1} follows from Young's convolution inequality.  We also observe that if $\nabla P_{\leq N}$ were present, then we could repeat the same argument, noting that the convolution kernel of  $\tfrac{\nabla}{N}P_{\leq N}$ is a Schwartz function.

We will obtain the second estimate essentially from the first.  We first claim that we may bound
\[
\|P_{\leq N} \tilde \chi_\rho \tfrac{\nabla}{M} P_M \|_{L^2\to L^2}\lesssim_m (M\rho)^{-m}
\]
for any $M\geq 4N$, where $\tilde\chi_\rho$ is a cutoff to $\rho\leq |x|\leq 2\rho$.  Indeed, if we apply Plancherel's theorem, then we are led to consider (the adjoint of) an operator of the same type we considered in \eqref{mme1}. Summing over dyadic $\rho\geq R$ then yields
\[
\|P_{\leq N} \chi_R^c \nabla P_M \|_{L^2\to L^2}\lesssim_m M(MR)^{-m},
\]
and summing again over dyadic $M\geq 4N$ yields \eqref{mme2} (provided $m\geq 2$, so that the total power of $M$ is negative).  \end{proof} 

We next recall a weighted radial Strichartz estimate (as in \cite{KTV, LiZhang}):

\begin{lemma}[Radial Strichartz estimate, \cite{KTV}]\label{L:radial-strichartz}  For radial $F:I\times\R^2\to\C$ and $t,t_0\in I$, we have
\[
\biggl\| \int_{t_0}^t e^{i(t-\tau)\Delta}F(\tau)\,d\tau\biggr\|_{L^2(\R^2)} \lesssim \| |x|^{-\frac12} F\|_{L_t^{\frac43}L_x^1(I\times\R^2)}.
\]
\end{lemma}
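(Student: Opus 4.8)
This estimate was established in \cite{KTV}; we sketch the argument. The plan is to dualize the claimed inhomogeneous bound into a homogeneous weighted Strichartz estimate for the free evolution, to prove that homogeneous estimate via the in/out decomposition for radial functions, and to pass from the non-retarded to the retarded version using the Christ--Kiselev lemma.

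First I would note that the temporal exponent $\tfrac43$ on the right-hand side is strictly smaller than the exponent $\infty$ naturally associated to the left-hand side (reading the conclusion of Lemma~\ref{L:radial-strichartz} as an $L_t^\infty L_x^2$ bound, with the right-hand side independent of $t$); hence by the Christ--Kiselev lemma it suffices to treat the non-retarded operator
\[
F\mapsto \int_I e^{-i\tau\Delta} F(\tau)\,d\tau.
\]
Dualizing this in $L^2(\R^2)$, and using that the dual of $L_t^{\frac43}L_x^1$ is $L_t^4 L_x^\infty$ while the weight $|x|^{-\frac12}$ pairs against $|x|^{\frac12}$ (via $F\bar G = (|x|^{-\frac12}F)\,(|x|^{\frac12}\bar G)$), the desired bound becomes equivalent to the homogeneous estimate
\begin{equation}\label{eq:hom-wstr}
\bigl\| |x|^{\frac12} e^{it\Delta} f\bigr\|_{L_t^4 L_x^\infty(\R\times\R^2)} \lesssim \|f\|_{L^2(\R^2)}
\end{equation}
for radial $f$. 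A brief scaling check confirms that \eqref{eq:hom-wstr} is scale-invariant, so it is a genuinely critical weighted Strichartz estimate (note that $(q,r)=(4,\infty)$ is not an admissible pair in two dimensions without the weight and the radial hypothesis).

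To prove \eqref{eq:hom-wstr} I would follow \cite{KTV}: decompose $f = f^+ + f^-$ into outgoing and incoming parts via the in/out decomposition, and analyze $|x|^{\frac12}e^{it\Delta}f^\pm$ for $t\geq 0$ (the case $t\leq 0$ being symmetric). Writing $e^{it\Delta}$ acting on radial functions as a Hankel-type oscillatory integral and invoking the asymptotics of Bessel functions, one obtains pointwise kernel bounds showing that $|x|^{\frac12}e^{it\Delta}f^\pm$ behaves, on the relevant space-time regions, like a one-dimensional Schr\"odinger/half-wave--type propagator applied to a function of comparable $L^2$ norm. For such one-dimensional propagators the $L_t^4 L_x^\infty$ bound follows from the one-dimensional dispersive estimate, a $TT^*$ argument and the Hardy--Littlewood--Sobolev inequality---indeed the exponent $4$ is exactly what this $TT^*$ computation produces in one dimension. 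Carrying out a Littlewood--Paley--localized version of this analysis and summing the resulting square function yields \eqref{eq:hom-wstr}.

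The main obstacle is the $L_x^\infty$ spatial endpoint in \eqref{eq:hom-wstr}: this exponent lies outside the range of ordinary Strichartz estimates, and it is exactly here that the radial hypothesis is essential, through the enhanced dispersion of the radial propagator encoded in the Bessel asymptotics. The delicate region is $|x|\sim t^{1/2}$, where the phase in the oscillatory-integral representation is stationary and the kernel estimates require careful stationary-phase analysis; this is carried out in detail in \cite{KTV}, to which we refer for the remaining details.
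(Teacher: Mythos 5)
The paper does not prove this lemma at all: it is imported directly from \cite{KTV} (as is standard for these weighted radial Strichartz estimates), so there is no in-paper argument to compare against. Your sketch is a correct outline of the known proof, and the core of it is right: by unitarity and H\"older the bound reduces to the dual homogeneous estimate $\| |x|^{1/2} e^{it\Delta} f\|_{L_t^4 L_x^\infty} \lesssim \|f\|_{L^2}$ for radial $f$, which is scale-invariant and lies outside the unweighted admissible range, and which follows from a weighted dispersive bound plus $TT^*$ and Hardy--Littlewood--Sobolev. Two of your ingredients are heavier than needed, though. First, the in/out decomposition plays no role here: for radial $f$ one simply averages the free kernel over the circle, and the $J_0$ (Bessel) bound $|J_0(z)|\lesssim \langle z\rangle^{-1/2}$ gives directly
\[
\bigl\| |x|^{\frac12} e^{it\Delta} f \bigr\|_{L_x^\infty(\R^2)} \lesssim |t|^{-\frac12}\, \bigl\| |x|^{-\frac12} f \bigr\|_{L_x^1(\R^2)},
\]
uniformly in all space-time regions --- there is no delicate stationary-phase region $|x|\sim t^{1/2}$; the stationary phase occurs only in the angular integral and the resulting bound is uniform. (The in/out decomposition and its kernel estimates are needed elsewhere in this paper, in Lemma~\ref{L:propsPpm} and the proof of Lemma~\ref{L:frequency-decay}, not for this lemma.) Second, Christ--Kiselev is avoidable: since the left-hand side is an $L_x^2$ norm at a single time $t$, you may write $\int_{t_0}^t e^{i(t-\tau)\Delta}F\,d\tau = e^{it\Delta}\int_I e^{-i\tau\Delta}\bigl(\mathbf{1}_{[t_0,t]}F\bigr)\,d\tau$ and apply the non-retarded estimate to the truncated forcing, whose weighted norm is no larger; invoking Christ--Kiselev ($\tfrac43<\infty$) is not wrong, just unnecessary. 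With those simplifications your argument is exactly the proof in \cite{KTV}.
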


We utilize the `incoming/outgoing' decomposition for radial functions introduced in \cite{KTV}.  In particular, for a radial function $f:\R^2\to\C$, we let
\[
[P^{\pm}f](r) = \tfrac12(r) \pm \tfrac{i}{\pi}\int_0^\infty \frac{f(\rho)}{r^2-\rho^2}\,\rho\,d\rho.
\]
We call $P^+$ the projection onto outgoing spherical waves and $P^-$ the projection onto incoming spherical waves. For a dyadic number $N>0$, we let $P_N^\pm$ denote the composition $P^\pm P_N$.

We record the essential facts we need concerning $P^\pm$ in the following lemma, which appears in \cite{KTV}. 

\begin{lemma}[Properties of $P^\pm$, \cite{KTV}]\label{L:propsPpm} The operators $P^\pm$ are bounded on $L^2(\R^2)$, with $P^++P^-$ giving the projection from $L^2$ onto $L^2_{rad}$. Furthermore, if
\[
|x|\gtrsim N^{-1} \qtq{and} t\gtrsim N^{-2},
\]
then we have the kernel estimate
\[
|P_N^\pm e^{\mp it\Delta}(x,y)| \lesssim \begin{cases} (|x|\,|y|)^{-\frac12} |t|^{-\frac12} & |y|-|x|\sim Nt, \\ \\ 
\tfrac{N^2}{(N|x|)^{1/2}\langle N|y|\rangle^{1/2}}\langle N^2 t+N|x|-N|y|\rangle^{-100} & \text{otherwise}.\end{cases}
\]
\end{lemma}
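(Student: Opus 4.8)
The lemma splits into two essentially independent parts: the $L^2$ mapping properties of $P^\pm$, and the pointwise bound on the kernel of $P_N^\pm e^{\mp it\Delta}$. For the first part, I identify radial functions on $\R^2$ with functions on $(0,\infty)$ carrying the measure $r\,dr$. Under the substitution $s=r^2$, $\sigma=\rho^2$, the singular integral $\int_0^\infty \frac{f(\rho)}{r^2-\rho^2}\,\rho\,d\rho$ becomes a constant multiple of the Hilbert transform on $(0,\infty)$ applied to $\sigma\mapsto f(\sqrt\sigma)$; since the Hilbert transform is bounded on $L^2$, so is this operator on $L^2((0,\infty),r\,dr)$, whence $P^\pm$ are bounded on $L^2_{\text{rad}}(\R^2)$ and, extending by zero on the orthogonal complement, on all of $L^2(\R^2)$. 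That $P^++P^-$ is the orthogonal projection onto $L^2_{\text{rad}}$ is then immediate from the definition, since the two singular-integral terms cancel and the two copies of $\tfrac12 f(r)$ add to $f(r)$.

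For the kernel bound, the plan is to pass to the Hankel representation and run a stationary/non-stationary phase analysis. Recalling that the kernel (with respect to Lebesgue measure) of a radial multiplier $m(|\xi|)$ is $c\int_0^\infty m(\xi)\,J_0(|x|\xi)\,J_0(|y|\xi)\,\xi\,d\xi$, and that $P^+$ (resp.\ $P^-$) amounts to replacing the first factor by $H_0^{(1)}(|x|\xi)$ (resp.\ $H_0^{(2)}(|x|\xi)$)---this is exactly the purpose of the $\tfrac12 f(r)$ term, given $J_0=\tfrac12(H_0^{(1)}+H_0^{(2)})$---the kernel of $P_N^\pm e^{\mp it\Delta}$ at $(x,y)$ is a constant multiple of
\[
\int_0^\infty e^{\mp it\xi^2}\,\varphi(\xi/N)\,H_0^{(1,2)}(|x|\xi)\,J_0(|y|\xi)\,\xi\,d\xi ,
\]
where $\varphi$ is a bump adapted to $\{\xi\sim N\}$. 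On the support of the integrand $|x|\xi\gtrsim 1$ (from $|x|\gtrsim N^{-1}$), so I may insert the large-argument asymptotics $H_0^{(1,2)}(z)=z^{-1/2}e^{\pm iz}a_\pm(z)$ with $a_\pm$ a symbol of order zero; for $J_0(|y|\xi)$ I split according to whether $|y|\xi\gtrsim 1$, where $J_0(z)=z^{-1/2}\sum_\pm e^{\pm iz}b_\pm(z)$ with $b_\pm$ symbols of order zero, or $|y|\xi\lesssim 1$, where $J_0$ is smooth and $O(1)$ and is absorbed into the amplitude. This reduces matters to finitely many scalar oscillatory integrals over $\{\xi\sim N\}$ with phase $\Phi(\xi)=\mp t\xi^2\pm|x|\xi\pm|y|\xi$---the signs of the first two terms being tied to the choices of $P^\pm$ and $e^{\mp it\Delta}$---and with amplitude of size $\sim(|x|\xi)^{-1/2}\langle|y|\xi\rangle^{-1/2}\,\xi$ enjoying symbol-type control of its $\xi$-derivatives.

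The core is then a case analysis of $\Phi'(\xi)=\mp 2t\xi\pm|x|\pm|y|$ on $\{\xi\sim N\}$. Because $P^\pm$ is paired with $e^{\mp it\Delta}$, only one sign combination can vanish there, and only when its critical point $\xi_c=\tfrac{|y|-|x|}{2t}$ lies in $\{\xi\sim N\}$, i.e.\ when $|y|-|x|\sim Nt$ with $|y|>|x|$; for that term $|\Phi''|=2|t|\gtrsim N^{-2}$, so the van der Corput second-derivative estimate applies, and combined with the amplitude size (note $|y|\gtrsim Nt\gtrsim N^{-1}$ in this regime, so $|y|\xi\gtrsim 1$) it yields the bound $(|x|\,|y|)^{-1/2}|t|^{-1/2}$. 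In every other situation---all other sign combinations, the ``resonant'' one when $|y|-|x|\not\sim Nt$, and the regime $|y|\xi\lesssim 1$---one checks that $|\Phi'(\xi)|\gtrsim N^{-1}\bigl\langle N^2t+N|x|-N|y|\bigr\rangle$ throughout $\{\xi\sim N\}$, so repeated integration by parts (using the symbol bounds on $a_\pm$, $b_\pm$ and on $\varphi(\cdot/N)$, whose differentiation costs a factor $N^{-1}$) produces the factor $\langle N^2t+N|x|-N|y|\rangle^{-100}$ against the trivial amplitude size $N^2(N|x|)^{-1/2}\langle N|y|\rangle^{-1/2}$, giving the second alternative. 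I expect the main obstacle to be the uniform bookkeeping in this last step: verifying, with constants independent of $(N,|x|,|y|,t)$, that each non-resonant phase is genuinely non-stationary on $\{\xi\sim N\}$ with the stated lower bound on $|\Phi'|$, that the derivative losses from $a_\pm$, $b_\pm$ and from $\varphi(\xi/N)$ are always dominated by the gain $\langle\Phi'\rangle^{-1}$, and that the transition region $|y|\xi\approx 1$ is handled so that the two regimes for $J_0$ match.
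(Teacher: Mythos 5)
This lemma is not proved in the paper at all: it is imported verbatim from \cite{KTV}, so the only meaningful comparison is with the argument in that reference, and your proposal is essentially a reconstruction of it. The $L^2$ statements are handled exactly as you say: the substitution $s=r^2$, $\sigma=\rho^2$ turns the singular integral into a (half-line) Hilbert transform, and the cancellation/addition giving $P^++P^-=\mathrm{Id}$ on radial functions is immediate. The kernel bound is likewise obtained in \cite{KTV} by the route you outline: write the kernel as an oscillatory integral over $\xi\sim N$ with amplitude built from $H_0^{(1),(2)}(|x|\xi)$ and $J_0(|y|\xi)$, invoke the large-argument symbol expansions (legitimate since $|x|\xi\gtrsim 1$), observe that with the pairing of $P^\pm$ against $e^{\mp it\Delta}$ only one sign combination admits a critical point, located at $\xi_c=\frac{|y|-|x|}{2t}$ and hence inside $\{\xi\sim N\}$ only when $|y|-|x|\sim Nt$, where van der Corput with $|\Phi''|=2t$ gives $(|x|\,|y|)^{-1/2}|t|^{-1/2}$; off resonance one checks $|\Phi'(\xi)|\gtrsim N^{-1}\langle N^2t+N|x|-N|y|\rangle$ uniformly on $\xi\sim N$, and repeated integration by parts (each $\xi$-derivative of the amplitude costing $O(N^{-1})$) produces the weight to any power against the trivial size $N^2(N|x|)^{-1/2}\langle N|y|\rangle^{-1/2}$; the regime $N|y|\lesssim 1$ is absorbed as you indicate. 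Your bookkeeping is consistent with the stated bounds.

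The one step you assert rather than justify is the bridge between the paper's singular-integral definition of $P^\pm$ and the Hankel-multiplier description (replacing $J_0(|x|\xi)$ by $H_0^{(1),(2)}(|x|\xi)$). This is not a formal consequence of $J_0=\tfrac12\bigl(H_0^{(1)}+H_0^{(2)}\bigr)$ alone; it is precisely the classical principal-value identity $\mathrm{P.V.}\int_0^\infty \frac{J_0(\rho\xi)}{r^2-\rho^2}\,\rho\,d\rho=\tfrac{\pi}{2}\,Y_0(r\xi)$, applied under the Hankel transform (with a mild Fubini justification for exchanging the principal value with the $\xi$-integral), which is what makes $\tfrac12 f+\tfrac{i}{\pi}(\cdots)$ act as multiplication by $\tfrac12 H_0^{(1)}$ on the transform side. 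In \cite{KTV} this identification is verified (it reflects the fact that, in the variable $z=r^2$, $H_0^{(1)}(\xi\sqrt{z})$ is bounded and analytic in the upper half-plane, so its boundary value is reproduced by the Cauchy/Hilbert kernel); you should either prove this identity (contour shift/analytic continuation) or cite it explicitly. With that in place, the remaining issues you flag — uniform non-stationarity off resonance and matching across $|y|\xi\approx 1$ — are routine and work out as you describe.
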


With the preliminaries in place, we can now establish the crucial frequency decay estimate.  Before stating and proving the result, we remind the reader of the notation $\chi_R$, $\chi_R^c$ for spatial cutoffs introduced in Section~\ref{S:notation}. 

\begin{lemma}[Frequency decay]\label{L:frequency-decay} For all $N\geq 1$ and $t\geq 0$,
\begin{equation}\label{freq-decay}
\| \chi_1^c P_Nu(t)\|_{L^2} \lesssim \|P_N u_0\|_{L^2} + N^{-\frac{6}{5}}.
\end{equation}
\end{lemma}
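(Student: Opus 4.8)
The plan is to split the solution into a linear evolution of the data and the Duhamel (nonlinear) part, estimate the linear piece trivially, and extract quantitative frequency decay from the nonlinear piece using the reduced Duhamel formula together with the radial Strichartz estimate and the mismatch estimates. Concretely, one writes $u(t) = e^{it\Delta}u_0 - i\int_0^t e^{i(t-s)\Delta}F(u(s))\,ds$, and for the purpose of estimating $\|\chi_1^c P_N u(t)\|_{L^2}$ one would prefer to use the reduced formula from Corollary~\ref{C:reduced-duhamel}, writing the nonlinear part of $u(t)$ as $i\int_t^\infty e^{i(t-s)\Delta}F(u(s))\,ds$ (interpreted as a weak limit). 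For the frequency-localized, spatially-truncated piece we then need $\|\chi_1^c P_N \int_t^\infty e^{i(t-s)\Delta}F(u(s))\,ds\|_{L^2} \lesssim N^{-6/5}$. The linear contribution $\|\chi_1^c P_N e^{it\Delta}u_0\|_{L^2} \leq \|P_N u_0\|_{L^2}$ is immediate and accounts for the first term on the right-hand side of \eqref{freq-decay}.

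The core of the argument is the nonlinear estimate. First I would apply the radial Strichartz estimate (Lemma~\ref{L:radial-strichartz}) to bound $\|P_N \int_t^\infty e^{i(t-s)\Delta}F(u(s))\,ds\|_{L^2} \lesssim \||x|^{-1/2}\tilde P_N F(u)\|_{L_s^{4/3}L_x^1}$ (inserting a fattened projection since $F(u)$ is not frequency localized, and controlling the mismatch between $P_N$ and $\chi_1^c$ — or between $P_N$ and the high/low pieces of $F(u)$ — using Lemma~\ref{L:mismatch}). Then one estimates the two nonlinear terms separately. For the cubic term $|u|^2 u$, radial Sobolev embedding gives $\||x|^{-1/2}|u|^2 u\|_{L_x^1} \lesssim \||x|^{1/2}u\|_{L_x^\infty}\|u\|_{L_x^2}\|u\|_{L_x^\infty}\cdot(\text{something})$ — more carefully, one pairs a factor of $|x|^{-1/2}|u|$ against $\||x|^{1/2}u\|_{L^\infty}$-type bounds and the remaining $|u|^2$ against $L^2$ and $\dot H^1$ via the radial Sobolev inequality stated in Section~\ref{S:notation}, producing a bound controlled by the (finite, uniform) $H^1$ norm. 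The quintic term $|u|^4 u$ is handled similarly, with more factors absorbed by the radial Sobolev embedding; the defocusing sign plays no role. The point is that each term is bounded uniformly in time, and the $N^{-6/5}$ gain must come from frequency projection: one splits $F(u) = F(u)_{\leq N/4} + F(u)_{>N/4}$ roughly, uses $\tilde P_N F(u)_{\leq N/4}$ is negligible by frequency support, and on the high-frequency piece extracts decay via Bernstein combined with the uniform $\dot H^1$ bound — e.g. $\|P_{>N/4} g\|_{L^2}\lesssim N^{-1}\|g\|_{\dot H^1}$ — balanced against the integrability in $s$ of the Strichartz norm.

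The main obstacle, and the place where the specific exponent $6/5$ is forced, is getting the time integral $\int_t^\infty \ldots\,ds$ to converge and contribute a favorable power: the $L_s^{4/3}$ Strichartz norm over $[t,\infty)$ is not obviously finite from $H^1$-boundedness alone, so one must exploit the in/out decomposition (Lemma~\ref{L:propsPpm}) to decompose $e^{i(t-s)\Delta} = P^+ e^{i(t-s)\Delta} + P^- e^{i(t-s)\Delta}$ and use the dispersive kernel estimates on the region $|x|\gtrsim N^{-1}$ (available since we are on the support of $\chi_1^c$ and $N\geq 1$) to gain decay in $s-t$. This is exactly the maneuver of \cite{KLVZ, LiZhang}: the incoming/outgoing kernel bounds give $|t-s|^{-1/2}$ decay in the relevant regime, which after interpolating against the $\langle N^2(s-t)\rangle^{-100}$ tail and optimizing over the frequency-localization cutoff yields a net gain of $N^{-6/5}$. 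Carrying out this optimization — tracking the powers of $N$ from Bernstein, from the kernel estimate, and from the mismatch estimates, then choosing the split point — is the delicate bookkeeping step, but it is routine given the tools assembled above.
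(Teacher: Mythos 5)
There is a genuine gap, and it starts with your very first decomposition. You write $u(t)=e^{it\Delta}u_0+[\text{nonlinear part}]$ and then propose to represent the nonlinear part by the forward-in-time integral $i\int_t^\infty e^{i(t-s)\Delta}F(u(s))\,ds$. But Corollary~\ref{C:reduced-duhamel} says that this integral (as a weak limit) equals \emph{all} of $u(t)$, not the Duhamel correction to $e^{it\Delta}u_0$; the two representations of $u(t)$ are $e^{it\Delta}u_0-i\int_0^t e^{i(t-s)\Delta}F(u(s))\,ds$ and $\mathrm{w\text{-}lim}\, i\int_t^T e^{i(t-s)\Delta}F(u(s))\,ds$, and mixing them as you do double-counts and is simply not an identity. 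The paper's actual first move is different and is the key idea: apply the in/out decomposition to $u(t)$ itself, writing $\chi_1^c P_N u=\chi_1^cP_N^-u+\chi_1^cP_N^+u$, and then use the \emph{standard} (past) Duhamel formula for the incoming piece $P_N^-u$ and the \emph{reduced} (future) Duhamel formula for the outgoing piece $P_N^+u$. This pairing is forced by Lemma~\ref{L:propsPpm}: the kernel bounds are only for $P_N^{\pm}e^{\mp i\tau\Delta}$ with $\tau>0$, i.e.\ outgoing waves traced into the future and incoming waves traced into the past. In your scheme, where the in/out splitting is applied to the propagator inside a single future-time integral, the incoming half $P_N^-e^{-i\tau\Delta}$, $\tau>0$, has no useful kernel decay (incoming waves at radius $\gtrsim1$ move toward the origin at later times, where the solution need not be small), so there is no gain in $\tau$, the integral over $[t,\infty)$ does not converge, and the argument cannot close. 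The $\|P_Nu_0\|_{L^2}$ term in \eqref{freq-decay} also has a definite origin in the paper -- it is the free evolution $\chi_1^cP_N^-e^{it\Delta}u_0$ appearing in the past Duhamel formula for the incoming piece -- whereas in your setup it is put in by hand.

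Beyond this structural issue, the bookkeeping you defer is not where you think it is. In the paper the exponent $N^{-6/5}$ does not come from an optimization between the $\langle N^2\tau\rangle^{-100}$ kernel tail and Bernstein; it comes from the short-time region $\tau\in[0,N^{-1}]$, where the kernel estimates of Lemma~\ref{L:propsPpm} are unavailable (they require $\tau\gtrsim N^{-2}$, and the spatial-cutoff arguments need $\tau\gtrsim N^{-1}$). There one uses plain Strichartz, H\"older in time over an interval of length $N^{-1}$, and Bernstein with $|\nabla|^{1/5}$ applied to $F(u)$ (via the fractional chain rule and the uniform $H^1$ bound), giving $N^{-1}\cdot N^{-1/5}=N^{-6/5}$; all remaining regimes give strictly better powers ($N^{-51}$ from the kernel/mismatch estimates, $N^{-7/4}$ from the piece treated with Lemma~\ref{L:radial-strichartz} plus radial Sobolev). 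Your proposal never isolates the small-$\tau$ region at all, and your frequency-splitting heuristic ($\|P_{>N/4}g\|_{L^2}\lesssim N^{-1}\|g\|_{\dot H^1}$ balanced against time integrability) cannot by itself produce either convergence of the time integral or the stated power. Also note that the radial Strichartz estimate and the kernel estimates are used on \emph{different} pieces in the paper: Lemma~\ref{L:radial-strichartz} is applied only to the spatially far, frequency-high part of the nonlinearity, where the weight $|x|^{-1/2}$ together with the cutoff $\chi^c_{N\tau/2}$ and radial Sobolev embedding yields the $\tau^{-1}$ decay needed for the $L_\tau^{4/3}([N^{-1},\infty))$ norm; once you invoke Strichartz you have discarded the propagator, so you cannot simultaneously ``interpolate against'' its kernel tail as your sketch suggests.
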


\begin{proof} We begin by writing
\[
\chi_1^c P_N u(t) = \chi_1^c P_N^- u(t) + \chi_1^c P_N^+u(t)
\]
and using the standard Duhamel formula for $P_N^-u(t)$ and the reduced Duhamel formula for $P_N^+u(t)$ (see Corollary~\ref{C:reduced-duhamel}).  This and a change of variables leads to
\begin{align}
\chi_1^c P_Nu(t) &  = \chi_1^c P_N^-e^{it\Delta}u_0 -i\chi_1^c \int_0^t P_N^-e^{i\tau\Delta}F(u(t-\tau))\,d\tau \label{incoming1} \\
& \quad + i\chi_1^c \int_0^\infty P_N^+e^{-i\tau\Delta}F(u(t+\tau))\,d\tau \label{outgoing1},
\end{align}
where the final integral is interpreted as a weak $L^2$ limit.  The linear evolution term is controlled by the first term on the right-hand side of \eqref{freq-decay}, and hence it suffices to consider the integral terms.  We focus on treating the term in \eqref{outgoing1}, as the remaining term may be handled in the same fashion. 

We begin by splitting 
\begin{align}
\eqref{outgoing1} & = i\chi_1^c \int_0^{N^{-1}} P_N^+e^{-i\tau\Delta}F(u(t+\tau))\,d\tau \label{outgoing2} \\
& \quad + i\chi_1^c \int_{N^{-1}}^\infty P_N^+ e^{-i\tau\Delta}\chi_{\frac12N\tau}F(u(t+\tau))\,d\tau \label{outgoing3}\\
& \quad + i\chi_1^c\int_{N^{-1}}^\infty P_N^+ e^{-i\tau\Delta}\chi_{\frac12N\tau}^cF(u(t+\tau))\,d\tau\label{outgoing4}.
\end{align}

We first estimate using Strichartz, H\"older, Bernstein, the fractional chain rule, and Sobolev embedding to obtain
\begin{align*}
\|\eqref{outgoing2}\|_{L^2} & \lesssim \|P_N F(u)\|_{L_\tau^1 L_x^2([t,t+N^{-1}]\times\R^2)} \\
& \lesssim N^{-\frac65}\| |\nabla|^{\frac15}[F(u)]\|_{L_t^\infty L_x^2} \\
& \lesssim N^{-\frac65}\bigl\{ \|u\|_{L_t^\infty L_x^6}^2\| |\nabla|^{\frac15}u\|_{L_t^\infty L_x^6}+\|u\|_{L_t^\infty L_x^{10}}^4 \| |\nabla|^{\frac15}u\|_{L_t^\infty L_x^{10}}\bigr\} \\
& \lesssim N^{-\frac65}\{\|u\|_{L_t^\infty H_x^1}^3+\|u\|_{L_t^\infty H_x^1}^5\} \lesssim N^{-\frac65}, 
\end{align*}
which is acceptable. 

We turn to \eqref{outgoing3}. We claim that by the kernel estimates in Lemma~\ref{L:propsPpm}, we have
\[
|[\chi_1^c P_N^+e^{-i\tau\Delta}\chi_{\frac12N\tau}](x,y)|\lesssim \frac{N^2}{(N^2\tau)^{50}\langle N|x-y|\rangle^{50}} \qtq{for} \tau\geq N^{-1}. 
\]
Indeed, it suffices to observe that we have the bounds
\[
\langle N^2\tau + N|x|-N|y|\rangle \gtrsim \max\{ N^2\tau,\langle N|x-y|\rangle\}
\]
in this regime. 
Thus by Young's convolution inequality and Sobolev embedding, we obtain 
\begin{align*}
\|\eqref{outgoing3}\|_{L^2} & \lesssim \bigl\| \tfrac{N^2}{\langle N|x|\rangle^{50}}\ast F(u)\bigr\|_{L_t^\infty L_x^2} \cdot\int_{N^{-1}}^\infty (N^2\tau)^{-50}\,d\tau \\
& \lesssim N^{-51} \|F(u)\|_{L_t^\infty L_x^2} \\ 
& \lesssim N^{-51}\{\|u\|_{L_t^\infty L_x^6}^3 +\|u\|_{L_t^\infty L_x^{10}}^5\} \\
& \lesssim N^{-51}\{\|u\|_{L_t^\infty H_x^1}^3 + \|u\|_{L_t^\infty H_x^1}^5\} \lesssim N^{-51},
\end{align*}
which is acceptable. 

Finally, we turn to \eqref{outgoing4}. We begin by writing
\[
\chi_{\frac12 N\tau}^c F(u) = \chi_{\frac12 N\tau}^c F(\chi_{\frac14 N\tau}^c u) 
\]
and then further decomposing in frequency, leading to 
\begin{align}
\eqref{outgoing4} &= i\chi_1^c\int_{N^{-1}}^\infty P_N^+ e^{-i\tau\Delta} \chi_{\frac12 N\tau}^c P_{\leq \frac{N}{8}}F(\chi_{\frac14 N\tau}^c u(t+\tau))\,d\tau  \label{outgoing5} \\
& \quad + i\chi_1^c \int_{N^{-1}}^\infty P_N^+ e^{-it\Delta}\chi_{\frac12 N\tau}^c P_{>\frac{N}{8}}F(\chi_{\frac14 N\tau}^c u(t+\tau))\,d\tau.\label{outgoing6} 
\end{align}

To estimate \eqref{outgoing5}, we utilize the mismatch estimates in Lemma~\ref{L:mismatch}.  This yields
\begin{align*}
\|\eqref{outgoing5}\|_{L^2} & \lesssim \int_{N^{-1}}^\infty (N^2\tau)^{-50}\|F(u(t+\tau))\|_{L^2}\,d\tau \\
& \lesssim N^{-51} \bigl\{\|u\|_{L_t^\infty L_x^6}^3 + \|u\|_{L_t^\infty L_x^{10}}^5\bigr\} \lesssim N^{-51},
\end{align*}
which is acceptable.

Finally, we turn to \eqref{outgoing6}.  Observing that
\[
\|\chi_{\frac14 N\tau}^c u\|_{L_t^\infty \dot H_x^1} \lesssim \|u\|_{L_t^\infty H_x^1}\lesssim 1,
\]
we use Lemma~\ref{L:radial-strichartz}, Bernstein, the chain rule, radial Sobolev embedding to estimate
\begin{align*}
\|&\eqref{outgoing6}\|_{L^2}  \lesssim \| |x|^{-\frac12}\chi_{\frac12 N\tau}^c P_{>\frac{N}{8}}F(\chi_{\frac14 N\tau}^c u(\cdot+t))\|_{L_t^{\frac43}L_x^1([N^{-1},\infty))\times\R^2)} \\
& \lesssim \bigl\| (N\tau)^{-\frac12} N^{-1} \|\nabla F(\chi_{\frac14 N\tau}^cu(\cdot+t))\|_{L_x^1}\bigr\|_{L_t^{\frac43}([N^{-1},\infty))} \\
& \lesssim N^{-\frac32}\bigl\|\tau^{-\frac12} \|\chi_{\frac14 N\tau}^c u(\cdot+t)\|_{L_x^\infty}\bigr\|_{L_t^{\frac43}([N^{-1},\infty))}\|u\|_{L_t^\infty H_x^1}\{\|u\|_{L_t^\infty L_x^2}+\|u\|_{L_t^\infty L_x^6}^3\} \\
& \lesssim N^{-2} \| \tau^{-1}\|_{L_t^{\frac43}([N^{-1},\infty))} \| |x|^{\frac12} u\|_{L_{t,x}^\infty}\{\|u\|_{L_t^\infty H_x^1}^2 + \|u\|_{L_t^\infty H_x^1}^4\} \\
& \lesssim N^{-\frac{7}{4}}\{\|u\|_{L_t^\infty H_x^1}^3 + \|u\|_{L_t^\infty H_x^1}^5\} \lesssim N^{-\frac74},
\end{align*}
which is acceptable. \end{proof}

Finally, we turn to the proof of Proposition~\ref{P:localization}.  In fact, with Lemma~\ref{L:frequency-decay} in place, we can follow rather closely the proof of \cite[Theorem~1.11]{KLVZ}. 
\begin{proof}[Proof of Proposition~\ref{P:localization}]  Let $\eta>0$.  We then let $N_0>0$ be a large parameter and $\eta_1>0$ a small parameter to be chosen more precisely below.  Using compactness in $L^2$, we may find $C(\eta_1)$ sufficiently large so that
\begin{equation}\label{L2-tight}
\int_{|x|>C(\eta_1)\lambda(t)} |u(t,x)|^2\,dx < \eta_1 \qtq{for all}t\in[0,\infty). 
\end{equation}
Setting 
\[
R=2C(\eta_1)\lambda(t),
\]
we begin by estimating 
\begin{equation}\label{init-decomp}
\|\chi_R^c \nabla u(t)\|_{L^2} \leq \|P_{\leq N_0}\chi_R^c \nabla u(t)\|_{L^2}+\|P_{>N_0}\chi_R^c\nabla u(t)\|_{L^2}.
\end{equation}

\emph{Low frequencies.} We first estimate the low frequency term, beginning with
\begin{align}
\|&P_{\leq N_0}\chi_R^c \nabla u\|_{L^2}\nonumber \\
& \leq \|\chi_R^c \nabla P_{\leq 4N_0}\chi_{R/2} u\|_{L^2} + \|\nabla P_{\leq 4N_0}\chi_{R/2}^c u\|_{L^2} + \|P_{\leq N_0} \chi_R^c \nabla P_{>4N_0}u\|_{L^2}. \label{lowfreq1}
\end{align}

The first term on the right-hand side of \eqref{lowfreq1} is estimated by the mismatch estimates in Lemma~\ref{L:mismatch}, yielding the bound
\[
 \|\chi_R^c \nabla P_{\leq 4N_0}\chi_{R/2} u\|_{L^2} \lesssim R^{-2}N_0^{-1}\lesssim N_0^{-1} 
\]
For the second term on the right-hand side of \eqref{lowfreq1}, we instead use \eqref{L2-tight} and obtain
\[
\|\nabla P_{\leq 4N_0}\chi_{R/2}^c u\|_{L^2} \lesssim N_0 \eta_1.
\]
Finally, for the third term on the right-hand side of \eqref{lowfreq1} we use the mismatch estimates in Lemma~\ref{L:mismatch} to obtain
\[
\|P_{\leq N_0} \chi_R^c \nabla P_{>4N_0}u\|_{L^2}  \lesssim N_0(N_0 R)^{-2} \lesssim N_0^{-1}. 
\]
Putting together the pieces, we obtain
\[
\|P_{\leq N_0}\chi_R^c\nabla u\|_{L^2} \lesssim N_0^{-1}+\eta N_0. 
\]

\emph{High frequencies.}  For the high frequency term in \eqref{init-decomp}, we begin by writing
\begin{equation}\label{highfreq1}
\begin{aligned}
\| &P_{>N_0}\chi_R^c \nabla u\|_{L^2}^2 \\
& \lesssim \sum_{N>N_0}\|P_N\chi_R^c \nabla[P_{\leq N/4}+P_{>4N}]u\|_{L^2}^2 +\sum_{N>N_0}\|\chi_R^c \nabla P_{N/4\leq \cdot\leq 4N}u\|_{L^2}^2.
\end{aligned}
\end{equation}

The first term on the right-hand side of \eqref{highfreq1} can be estimated using the mismatch estimates of Lemma~\ref{L:mismatch}.  This yields
\[
\sum_{N>N_0}\|P_N\chi_R^c \nabla[P_{\leq N/4}+P_{>4N}]u\|_{L^2}^2  \lesssim \sum_{N>N_0} N^{-2}R^{-4} \lesssim N_0^{-2}.
\]
For the second term on the right-hand side of \eqref{highfreq1}, we further decompose as
\begin{equation}\label{highfreq2}
\begin{aligned}
\sum_{N>N_0}&\|\chi_R^c \nabla P_{N/4\leq \cdot\leq 4N}u\|_{L^2}^2 \\
& \lesssim \sum_{N>N_0/4}\|\chi_R^c \nabla\tilde P_N \chi_{R/2} P_N u\|_{L^2}^2 + \sum_{N>N_0/4} N^2\|  \chi_{R/2}^c P_N u\|_{L^2}^2.
\end{aligned}
\end{equation}
The first term on the right-hand side of \eqref{highfreq2} is amenable to the mismatch estimate in Lemma~\ref{L:mismatch}.  In particular, 
\[
\sum_{N>N_0/4}\|\chi_R^c \nabla\tilde P_N \chi_{R/2} P_N u\|_{L^2}^2 \lesssim \sum_{N>N_0/4} N^{-2}R^{-4} \lesssim N_0^{-2}. 
\]
Finally, for the second term on the right-hand side of \eqref{highfreq2}, we appeal to the frequency decay estimate, Lemma~\ref{L:frequency-decay} to obtain
\begin{align*}
\sum_{N>N_0/4} N^2 \|\chi_{R/2}^c P_N u\|_{L^2}^2 & \lesssim \sum_{N>N_0/4} N^2 \| P_N u_0\|_{L^2}^2 + \sum_{N>N_0/4} N^{-2/5} \\
&  \lesssim \|P_{>N_0/4}\nabla u_0\|_{L^2}^2 + N_0^{-2/5}. 
\end{align*}

Putting together all the pieces (including the low frequencies), we obtain
\[
\|\chi_R^c \nabla u(t)\|_{L^2} \lesssim \|P_{>N_0/4}\nabla u_0\|_{L^2} + N_0^{-1/5} + \eta_1 N_0. 
\]
Thus, choosing $N_0=N_0(\eta,u_0)$ sufficiently large and $\eta_1=\eta_1(N_0,\eta)$ sufficiently small, we obtain the desired estimate
\[
\|\chi_R^c \nabla u(t)\|_{L^2} \leq \eta \qtq{for all}t\geq 0,\qtq{where} R=2C(\eta_1)\lambda(t).
\]
\end{proof}

\section{Proof of the main result}\label{S:conclusion} 

Finally, we turn to the proof of Theorem~\ref{T}. 

\begin{proof}[Proof of Theorem~\ref{T}] If Theorem~\ref{T} fails, then we may find a solution $u$ as described in Proposition~\ref{P:compact}.  In particular, $u$ is pre-compact in $L^2$ modulo its scale function $\lambda(t)$.

Now let $\eta>0$. By Proposition~\ref{P:localization}, we may choose $C=C(\eta)>0$ large enough that 
\[
\sup_{t\in[0,\infty)} \int \tfrac12[1-\chi_{C\lambda(t)}] |\nabla u(t,x)|^2\,dx < \eta. 
\] 
On the other hand, appealing to Proposition~\ref{P:virial}, we may find a sequence $t_n\to\infty$ so that
\[
\limsup_{n\to\infty} \int \tfrac12 \chi_{C\lambda(t_n)}|\nabla u(t_n,x)|^2-\tfrac14|u(t_n,x)|^2+\tfrac16|u(t_n,x)|^6\,dx \leq 0. 
\]

Combining the previous two displays and using the conservation of energy, we obtain
\[
E(u) = \limsup_{n\to\infty} E(u(t_n)) \leq \eta. 
\]
As $\eta>0$ was arbitrary, we conclude $E(u)\leq 0$.  As this contradicts \eqref{positive-energy}, we complete the proof of Theorem~\ref{T}.\end{proof}


\end{document}